\documentclass{amsart} 
\usepackage{amssymb}
\usepackage{amsthm}
\usepackage{amsmath}
\usepackage[curve,matrix,arrow]{xy}
\theoremstyle{plain}\newtheorem{Theorem}{Theorem}[section]
\theoremstyle{plain}
\theoremstyle{plain}\newtheorem{Corollary}[Theorem]{Corollary}
\theoremstyle{plain}\newtheorem{Lemma}[Theorem]{Lemma}
\theoremstyle{plain}\newtheorem{Proposition}[Theorem]{Proposition}
\theoremstyle{definition}
\theoremstyle{definition}
\theoremstyle{definition}
\theoremstyle{definition}\newtheorem{Remark}[Theorem]{Remark}
\theoremstyle{definition}
\theoremstyle{plain}

    \def\OG{{\mathcal{O}G}}  \def\OGb{{\mathcal{O}Gb}}
    \def\OH{{\mathcal{O}H}}  
    \def\OP{{\mathcal{O}P}}
    \def\OQ{{\mathcal{O}Q}}
    \def\OR{{\mathcal{O}R}}
\def\CF{{\mathcal{F}}}

\def\CO{{\mathcal{O}}}

\def\F{{\mathbb F}}                 \def\bG{\mathbf{G}}
          \def\bP{\mathbf{P}}
          \def\bL{\mathbf{L}}
\def\Q{{\mathbb Q}}               \def\bV{\mathbf{V}}
\def\Z{{\mathbb Z}}

\def\Aut{\mathrm{Aut}}                
\def\Br{\mathrm{Br}}             \def\ten{\otimes}

\def\End{\mathrm{End}}

\def\Id{\mathrm{Id}}             
\def\IBr{\mathrm{IBr}}
             \def\tenB{\otimes_B}
\def\Ind{\mathrm{Ind}}

\def\Irr{\mathrm{Irr}}           

           \def\tenO{\otimes_{\mathcal{O}}}

           \def\tenOP{\otimes_{\mathcal{O}P}}

\def\Res{\mathrm{Res}}           \def\tenOR{\otimes_{\mathcal{O}R}}

\def\Tr{\mathrm{Tr}}

\title{On the $p$-part of the conductor of a generalised character} 
\author{Markus Linckelmann} 
\date{23.3. 2026}

\address{Markus Linckelmann \\
School of Science \& Technology \\
Department of Mathematics \\
City St. George's, University of London \\
Northampton Square \\
London EC1V 0HB \\
United Kingdom}
\email{markus.linckelmann.1@city.ac.uk}

\subjclass[2010]{20C15, 20C20, 20J05}

\keywords{Finite group, block, isotypy, generalised character, conductor}

\begin{document}

\maketitle

\begin{abstract}
We show that the $p$-part of the conductor of a generalised  character of a finite
group is equal to the conductor of its generalised decomposition numbers.
We use this to show that $p$-parts of conductors of irreducible characters 
are preserved under isotypies and perfect isometries that arise in the context of  
stable  equivalences of Morita type with endopermutation source. 
 We  apply this  to blocks with abelian defect and Frobenius inertial quotient. 
\end{abstract}

\section{Introduction} 

Let $p$ be a prime, $\CO$ a complete discrete valuation ring with residue
field $k$ of characteristic $p$ and field of fractions $K$ of characteristic zero.
For any non-negative integer $n$ we denote by $\zeta_n$ a primitive $n$-th
root of unity in an extension field of $K$.

\medskip
Let $G$ be a finite group. Assume that $K$ contains a primitive $|G|$-th root 
of unity. Denote by $\Irr(G)$ the set of $K$-valued irreducible characters of $G$
and by $\IBr(G)$ the set of $K$-valued irreducible Brauer characters. 
We denote by $G_p$ and $G_{p'}$ the sets of elements of $G$ of order
a power of $p$ and of order prime to $p$, respectively.
 Let $u\in $ $G_p$. By results of Brauer, for every generalised character
 $\chi\in$ $\Z\Irr(G)$ and any $\varphi\in$ $\IBr(G)$ there are unique cyclotomic integers
$d^u_{\chi,\varphi}$  such that for  every $s\in C_G(u)_{p'}$  we have 
$$\chi(us) = \sum_{\varphi\in\IBr(C_G(u))}\ d^u_{\chi,\varphi} \ \varphi(s).$$
The cyclotomic integers $d^u_{\chi,\varphi}$ are 
contained in $\Q(\eta)$ for some root of unity $\eta$ of $p$-power order
dividing the order of $u$. For an expository account of these facts see
e.g. \cite[\S 5.15]{LiBookI}. Following the notation and terminology in
\cite{NT21}, \cite{HungFry}, 
given a  set $S$ of cyclotomic integers, we denote by $c(S)$ the
smallest positive integer $n$ such that $S\subseteq$ $\Q(\zeta_n)$.
If $S$ consists of a single cyclotomic integer $\sigma$, then we write
$c(\sigma)$ instead of $c(\{\sigma\})$. 

\medskip
For $\chi\in$ $\Z\Irr(G)$, we set $c(\chi)=$ $c(\{\chi(g)\ |\ g\in G\})$; this
integer is called the {\em conductor of} $\chi$. 
We denote by $c(\chi)_p$ the $p$-part of $c(\chi)$; that is, $c(\chi)_p$ is
the highest power $p^a$ of $p$ that divides $c(\chi)$. The integer $a$ is called
{\em $p$-rationality level of $\chi$} in  \cite{HungFry}.  
The present paper has been motivated by the question, raised by
Hung Ngoc Nguyen, whether perfect isometries can be used 
for calculating conductors of characters. We show in this paper
that character bijections induced by isotypies preserve the $p$-parts of conductors.
This is based on the  following result which shows that $c(\chi)_p$ is determined 
by the non-ordinary generalised decomposition numbers of $\chi$. 

\begin{Theorem}\label{thm1}
Let $G$ be a finite group. Assume that $K$ contains a primitive $|G|$-th root 
of unity.  For any $\chi\in$ $\Z\Irr(G)$ we have 
$$c(\chi)_p = c\left(\{d^u_{\chi,\varphi}\ |\ u\in G_p\ \text{and}\ 
\varphi\in\IBr(C_G(u))\}\right).$$
\end{Theorem}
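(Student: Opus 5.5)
The plan is to reduce both sides of Theorem \ref{thm1} to a statement about fixed fields of Galois automorphisms, and then compare them using the uniqueness of the generalised decomposition numbers.

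First, a basic fact about conductors. For any set $S$ of cyclotomic integers we have $\Q(\zeta_n)\cap\Q(\zeta_{n'})=\Q(\zeta_{\gcd(n,n')})$. Hence $S\subseteq\Q(\zeta_n)$ holds if and only if $c(S)$ divides $n$. Write $|G|=p^am$ with $p\nmid m$. All values of $\chi$ lie in $\Q(\zeta_{|G|})$, so $c(\chi)$ divides $p^am$. It follows that $c(\chi)_p$ is the smallest $p^b$, with $b\le a$, such that all values of $\chi$ lie in $\Q(\zeta_{p^bm})$.

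Similarly, all $d^u_{\chi,\varphi}$ lie in $\Q(\zeta_{p^a})$. Therefore the right-hand side of the theorem divides $p^a$, so it is a power of $p$. It equals the smallest $p^b$ such that all $d^u_{\chi,\varphi}$ lie in $\Q(\zeta_{p^b})$. Since $\Q(\zeta_{p^bm})\cap\Q(\zeta_{p^a})=\Q(\zeta_{p^b})$, this is the same as the smallest $p^b$ such that all $d^u_{\chi,\varphi}$ lie in $\Q(\zeta_{p^bm})$.

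It therefore suffices to show the following for each $b\le a$: all values of $\chi$ lie in $\Q(\zeta_{p^bm})$ if and only if all $d^u_{\chi,\varphi}$ lie in $\Q(\zeta_{p^bm})$. By Galois theory applied to $\Q(\zeta_{p^am})/\Q(\zeta_{p^bm})$, this becomes a statement about invariance under the group $\Gamma_b$ of automorphisms $\sigma$ with $\sigma(\zeta_m)=\zeta_m$ and $\sigma(\zeta_{p^a})=\zeta_{p^a}^r$, where $r\equiv 1\pmod{p^b}$. We may choose such an $r$ with $r\equiv 1\pmod m$, via the Chinese remainder theorem.

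Now fix $\sigma\in\Gamma_b$, $u\in G_p$ and $s\in C_G(u)_{p'}$. Each Brauer character value $\varphi(s)$ is a sum of $m$-th roots of unity, so $\sigma$ fixes it. Applying $\sigma$ to Brauer's formula gives
$$\sigma(\chi(us))=\sum_{\varphi\in\IBr(C_G(u))}\sigma(d^u_{\chi,\varphi})\,\varphi(s).$$
Suppose $\chi$ is $\sigma$-invariant. Subtracting the original formula, the function $\sum_\varphi(\sigma(d^u_{\chi,\varphi})-d^u_{\chi,\varphi})\varphi$ vanishes on $C_G(u)_{p'}$. The irreducible Brauer characters are linearly independent as functions on $C_G(u)_{p'}$; this is the uniqueness statement quoted in the introduction. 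Hence $\sigma(d^u_{\chi,\varphi})=d^u_{\chi,\varphi}$ for all $u$ and $\varphi$. Conversely, if all $d^u_{\chi,\varphi}$ are $\sigma$-fixed, the displayed formula shows $\sigma(\chi(us))=\chi(us)$. Every $g\in G$ factors as $g=us$ with $u$ its $p$-part and $s\in C_G(u)_{p'}$ its $p'$-part, so $\chi$ is $\sigma$-fixed. This proves the equivalence for every $b$, and the theorem follows.

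The only delicate point is the conductor bookkeeping in the first step. One must check that "smallest $n$" interacts correctly with the divisibility argument, including the coincidence $\Q(\zeta_n)=\Q(\zeta_{2n})$ for odd $n$. This is handled by the $\gcd$ intersection formula, which shows that every $n$ with $S\subseteq\Q(\zeta_n)$ is a multiple of $c(S)$. The Galois-theoretic comparison itself is routine once linear independence of Brauer characters is invoked.
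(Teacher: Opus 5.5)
Your proof is correct. Half of it coincides with the paper: the implication ``all $d^u_{\chi,\varphi}$ lie in $\Q(\zeta_{p^bm})$ $\Rightarrow$ all values of $\chi$ do'' is exactly how the paper shows that $c(\chi)_p$ is at most the right-hand side, read off directly from Brauer's formula.

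For the other direction your route is different. You let $\sigma\in\mathrm{Gal}(\Q(\zeta_{p^am})/\Q(\zeta_{p^bm}))$ act on Brauer's formula and use that $\sigma$ fixes Brauer character values. You then conclude from uniqueness of the coefficients that $\sigma(d^u_{\chi,\varphi})=d^u_{\chi,\varphi}$. You rightly note that the differences $\sigma(d^u_{\chi,\varphi})-d^u_{\chi,\varphi}$ are cyclotomic integers, so uniqueness in the form stated in the introduction suffices.

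The paper instead inverts Brauer's formula explicitly by Brauer reciprocity: $d^u_{\chi,\psi}=\frac{1}{|C_G(u)|}\sum_{s\in C_G(u)_{p'}}\chi(us)\Psi(s^{-1})$, where $\Psi$ is the character of the projective cover of the simple $kC_G(u)$-module with Brauer character $\psi$. Since the values $\Psi(s^{-1})$ involve only $p'$-roots of unity, this places $d^u_{\chi,\psi}$ directly in the field generated by $\zeta_{c(\chi)_p}$ and the $p'$-roots of unity.

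What each route buys:
\begin{itemize}
\item Your argument needs less input: only linear independence of $\IBr(C_G(u))$ on $C_G(u)_{p'}$, rather than the orthogonality relations with projective indecomposable characters. It also treats both inclusions uniformly at each level $b$. Your conductor bookkeeping is more explicit than the paper's, including the final intersection with a $p$-power cyclotomic field, which the paper leaves implicit.
\item The paper's argument avoids Galois theory. It gives a concrete formula pairing $\chi$ against a class function with values in the field of $p'$-roots of unity. The same mechanism reappears in Puig's description $d^u_{\chi,\varphi}=\chi(uj)$, which is used later in the paper.
\end{itemize}
Both arguments in fact yield the equivariance $d^u_{\sigma\circ\chi,\varphi}=\sigma(d^u_{\chi,\varphi})$ for every $\sigma$ fixing the $p'$-roots of unity.
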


This will be proved in Section \ref{thm1-proof}. Since the numbers $d^u_{\chi,\varphi}$ in
Theorem \ref{thm1} involve only $p$-power roots of unity, it follows that
$c(\chi)_p$ is in fact the maximum of the numbers $c(d^u_{\chi,\varphi})$, with
$u\in$ $G_p$ and $\varphi\in$ $\IBr(C_G(u))$.

\begin{Corollary}\label{Cor05}
Let $G$ be a finite group. Assume that $K$ contains a primitive $|G|$-th root 
of unity.  Let $\chi\in$ $\Z\Irr(G)$. There is $u\in G_p$ and $\varphi\in$
$\IBr(C_G(u))$ such that $c(\chi)_p=c(d^u_{\chi,\varphi})$ and such that
$c(\chi)_p \geq c(d^{v}_{\chi, \psi})$ for all $v\in G_p$ and
all $\psi \in$ $\IBr(C_G(v))$
\end{Corollary}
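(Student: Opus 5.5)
The plan is to derive the corollary directly from Theorem \ref{thm1}, using the fact that every $d^u_{\chi,\varphi}$ lies in a cyclotomic field of $p$-power conductor. First we record the elementary facts about conductors that we need. For a cyclotomic integer $\sigma$, $c(\sigma)$ is the least $n$ with $\sigma\in\Q(\zeta_n)$. Since $\Q(\zeta_m)\cap\Q(\zeta_n)=\Q(\zeta_{\gcd(m,n)})$, the set of $n$ with $\sigma\in\Q(\zeta_n)$ consists exactly of the multiples of $c(\sigma)$; the same holds for any finite set $S$ and $c(S)$. Moreover, for $n$ odd we have $\Q(\zeta_{2n})=\Q(\zeta_n)$, so conductors are never $\equiv 2 \pmod 4$. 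Hence if $\sigma\in\Q(\zeta_{p^b})$ for some $b$, then $c(\sigma)$ divides $p^b$, and is therefore a power of $p$; for $p=2$ it may be $1$.

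Next we apply this to the generalised decomposition numbers. As recalled in the introduction, each $d^u_{\chi,\varphi}$ lies in $\Q(\eta)$ for some root of unity $\eta$ of $p$-power order, so each $c(d^u_{\chi,\varphi})$ is a power of $p$. The set $S=\{d^u_{\chi,\varphi}\mid u\in G_p,\ \varphi\in\IBr(C_G(u))\}$ is finite. For a finite set of cyclotomic integers, $c(S)$ is the least common multiple of the $c(\sigma)$ with $\sigma\in S$. Indeed, $S\subseteq\Q(\zeta_n)$ if and only if every $c(\sigma)$ divides $n$, using the divisibility characterisation from the first paragraph. The least common multiple of finitely many powers of $p$ is their maximum. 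So Theorem \ref{thm1} gives $c(\chi)_p=c(S)=\max_{\sigma\in S}c(\sigma)$.

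To conclude, choose $u\in G_p$ and $\varphi\in\IBr(C_G(u))$ with $c(d^u_{\chi,\varphi})$ maximal. Then $c(\chi)_p=c(d^u_{\chi,\varphi})\ge c(d^v_{\chi,\psi})$ for all $v\in G_p$ and all $\psi\in\IBr(C_G(v))$, which is the statement of Corollary \ref{Cor05}. The set $S$ is nonempty: take $u=1$, since $\IBr(G)$ is nonempty. There is no real obstacle here. The only point requiring care is the lcm/gcd characterisation of conductors, and this follows from the standard intersection formula for cyclotomic fields.
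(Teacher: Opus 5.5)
Your proof is correct, and it is exactly the deduction the paper itself gives right after Theorem \ref{thm1}: every $d^u_{\chi,\varphi}$ lies in a cyclotomic field of $p$-power conductor, so the conductor of the finite set of them is the maximum of the individual conductors, and any $(u,\varphi)$ attaining that maximum works. The proof written out in Section \ref{gendec-Section} is only an alternative route via Puig's description of generalised decomposition numbers: choose $g=us$ with $c(\chi(g))_p=c(\chi)_p$, expand $s$ in primitive idempotents of $\CO C_G(u)$ with $p'$-root-of-unity coefficients, and apply Lemma \ref{lem05}.
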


As a consequence of results due to Puig,  the  generalised decomposition numbers 
of a block, and hence $p$-parts of conductors of characters, 
can be read off its source  algebras (see \cite[Remark 6.13.10]{LiBookII}).
Thus a source algebra isomorphism, or equivalently, a Morita equivalence given
by a bimodule with trivial source,   preserves the $p$-parts of conductors of
characters corresponding to each other through such a Morita equivalence.
This conclusion holds more generally for Morita equivalences with endopermutation
sources.

\begin{Corollary} \label{Cor1}
Let $B$, $B'$ be blocks of finite group algebras $\OG$, $\OG'$, respectively,
and let $M$ be a $B'$-$B$-bimodule inducing a Morita equivalence between
$B$ and $B'$. Suppose that as an $\CO(G'\times G)$-module, $M$ has an
endopermutation source for some (hence any) vertex.  Suppose that $K$
contains a primitive root of order divisible by $|G|$ and $|G'|$.
If $\chi\in \Irr(B)$ and $\chi'\in\Irr(B')$ correspond to each other via the functor
$M\tenB-$, then $c(\chi)_p=c(\chi')_p$.
\end{Corollary}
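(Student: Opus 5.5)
By Theorem \ref{thm1}, $c(\chi)_p$ is the conductor of the set of generalised decomposition numbers $d^u_{\chi,\varphi}$ of $\chi$, and similarly for $\chi'$. So it suffices to show that the Morita equivalence controls the fields generated by these numbers. More precisely, we will show that each generalised decomposition number of $\chi'$ is, up to a sign, a $\Z$-linear combination of generalised decomposition numbers of $\chi$ with rational coefficients, and conversely. Then the two sets generate the same field, so the two conductors agree.

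\emph{Step 1: set up the local data.} Since $M$ induces a Morita equivalence, $B$ and $B'$ have a common defect group $P$ with isomorphic fusion systems. Moreover, $M$ has vertex $\Delta P$ and an endopermutation source $V$. By Puig's results (see \cite[\S 9.11]{LiBookII}), the character bijection $\chi\mapsto\chi'$ is part of an isotypy. For each $u\in P$ there is a perfect isometry between the blocks $e_u$ of $C_G(u)$ and $e'_u$ of $C_{G'}(u)$ determined by the Brauer pairs. These local isometries are given by bimodules $M_u$ obtained from the Brauer construction of $M\tenO V^*$ (or a slice thereof), and they satisfy the usual compatibility with generalised decomposition maps. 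Concretely, for $s\in C_{G'}(u)_{p'}$,
$$\chi'(us) = \pm \sum \chi(u\,t)\cdot(\text{values of }\mu_u)\ \text{in a suitable sense},$$
where $\mu_u$ is the character of $M_u$. In terms of generalised decomposition numbers, this says that $d^u_{\chi',\varphi'}$ is obtained from the vector $(d^u_{\chi,\varphi})_\varphi$ by multiplying with the Cartan-type matrix of the Morita/stable equivalence between the $k$-blocks $kC_G(u)\bar e_u$ and $kC_{G'}(u)\bar e'_u$. That matrix has integer entries, up to a sign $\epsilon_u$ coming from the character of the endopermutation source at $u$. The sign is rational because the $\Omega$-twist contributes only $\pm1$, and the relevant source values $\chi_V(u)$ are rational integers: for endopermutation modules over $p$-groups, the character values at elements are $\pm p^{a}$ up to a unit.

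\emph{Step 2: conclude.} Given Step 1, every $d^u_{\chi',\varphi'}$ lies in the field generated by the $d^u_{\chi,\varphi}$, and by symmetry (using $M^*$) conversely. Every $u'\in G'_p$ is conjugate to an element of some Brauer pair on the $B'$ side, or else the corresponding decomposition numbers vanish because $e'_u$ does not occur. Therefore the two sets generate the same subfield of $\Q(\zeta_{p^a})$, and Theorem \ref{thm1} gives $c(\chi)_p=c(\chi')_p$.

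The main obstacle is Step 1: verifying that the scalars relating the local decomposition maps are genuinely rational. The issue is the endopermutation source, whose character at $u$ could a priori be irrational. I would handle this using the fact that an indecomposable endopermutation module with vertex $P$ in the source of a Morita equivalence can be chosen with a rational-valued character. Equivalently, its Brauer construction at $u$ is again endopermutation and has dimension a unit mod $p$. This gives the local Morita equivalences over $k$, and hence integral decomposition-matrix relations, as in \cite[Remark 6.13.10]{LiBookII} and its endopermutation analogue.
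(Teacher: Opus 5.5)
There is a genuine gap, and it is exactly the point you flag as the main obstacle in Step~1. The local analysis really yields the following; this is the argument of Lemma~\ref{lem8}, which uses no rationality until its final step. For $Q=\langle u\rangle$ fully $\CF$-centralised, a primitive local idempotent $j'\in(A')^Q$ and the $j\in A^Q$ attached to it by Lemma~\ref{lem75}, one has $\chi'(uj')=\chi_{V_Q}(u)\,\chi(uj)$. That is, $d^u_{\chi',\varphi'}=\chi_{V_Q}(u)\,d^u_{\chi,\varphi}$, where $V_Q$ is the summand of $\Res^P_Q(V)$ with vertex $Q$.

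Nothing forces the scalar $\chi_{V_Q}(u)$ to be $\pm1$, and each of your justifications fails:
\begin{itemize}
\item Every $\CO$-free $\OP$-module of rank one is an endopermutation module, and its character values are arbitrary $p$-power roots of unity. So ``$\pm p^a$ up to a unit'' conceals precisely the irrational unit that changes conductors.
\item The source $V$ is an invariant of $M$ (up to isomorphism and conjugation), not a choice. Replacing $V$ by $V\tenO L$ with $L$ of rank one, as in the reduction to the unramified subring recalled before Lemma~\ref{lem3}, replaces $M$ by a different bimodule and changes the bijection $\chi\mapsto\chi'$.
\item The condition on Brauer constructions is not equivalent to rationality of $\chi_V$. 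Since $k\tenO L\cong k$, the modules $V$ and $V\tenO L$ have the same reduction to $k$. So no data over $k$ (Brauer constructions, local Morita equivalences over $k$) can detect the field of character values.
\end{itemize}
For the same reason, your claim that $\chi\mapsto\chi'$ is part of an isotypy is unfounded. By Theorem~\ref{thm15} it would give the conclusion with no condition on $V$, and that conclusion is false.

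Indeed, take $p$ odd, $P$ cyclic of order $p$, $B=B'=\OP$, and $M=\Ind^{P\times P}_{\Delta P}(V)$ with $V$ of $\CO$-rank one affording a faithful linear character $\lambda$. This is the example the paper itself gives after Theorem~\ref{thm2}. Then $M$ induces a Morita equivalence, has vertex $\Delta P$ and endopermutation source $V$, and sends the trivial character $1_P$ to $\lambda$. Yet $c(1_P)_p=1\neq p=c(\lambda)_p$.

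So what is missing is a hypothesis rather than an idea: the source $V$ must have a $\Z$-valued character, as in Theorem~\ref{thm2}. The paper's one-line proof of this corollary (that the generalised decomposition numbers of $\chi$ and $\chi'$ differ at most by signs) is likewise valid only under that hypothesis.

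With that hypothesis, your outline works and reduces to the paper's argument. We get $\chi_{V_Q}(u)\in\{1,-1\}$, so the non-ordinary generalised decomposition numbers of $\chi$ and $\chi'$ coincide up to sign. This is Lemma~\ref{lem8}, since $M$ is an indecomposable bimodule inducing in particular a stable equivalence of Morita type. The ordinary ones are rational integers, and Theorem~\ref{thm1} gives $c(\chi)_p=c(\chi')_p$. No isotypy is needed.
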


This will be proved as an application of Theorem \ref{thm1} in 
Section \ref{gendec-Section}. 
We extend this result in two directions (with some overlap): 
to isotypies, and  to stable equivalences  of Morita type between blocks  
given by bimodules with endopermutation source. 

\begin{Theorem}\label{thm15}
Let $B$, $B'$ be blocks of finite group algebras $\OG$, $\OG'$, respectively.
Suppose that $B$ and $B'$ are isotypic.  Let $I : \Irr(B)\cong$ $\Irr(B')$ be
a bijection which is induced by an isotypy between $B$ and $B'$. Then
for every $\chi\in$ $\Irr(B)$ we have  $c(I(\chi))_p=$ $c(\chi)_p$.
\end{Theorem}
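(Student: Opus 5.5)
The plan is to reduce Theorem \ref{thm15} to Theorem \ref{thm1}: I will show that for $\chi\in\Irr(B)$ the field generated over $\Q$ by all generalised decomposition numbers $d^u_{\chi,\varphi}$ equals the one generated by the $d^{u'}_{I(\chi),\varphi'}$. Since $c(S)$ depends only on the field $\Q(S)$, Theorem \ref{thm1} then gives $c(\chi)_p=c(I(\chi))_p$.

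\emph{Step 1: reduce to elements of a common defect group.} I recall Broué's definition of an isotypy. The blocks $B$, $B'$ have a common defect group $P$ and the same fusion system on $P$, with respect to chosen maximal Brauer pairs $(P,e_P)$, $(P,e'_P)$. For each cyclic subgroup $Q=\langle u\rangle\leq P$ there is a perfect isometry $I_Q$ between $\CO C_G(Q)e_Q$ and $\CO C_{G'}(Q)e'_Q$, with $I_{\{1\}}=I$. These satisfy the compatibility
$$d^u_{G'}(I(\chi)) = I_{\langle u\rangle}\bigl(d^u_G(\chi)\bigr),$$
where $d^u_G(\chi)$ is the class function $s\mapsto \chi(us)$ on $C_G(u)_{p'}$, projected to the block $e_{\langle u\rangle}$.

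I also need to control all $u\in G_p$, not just $u\in P$. By Brauer's second main theorem, for $\chi\in\Irr(B)$ the function $s\mapsto\chi(us)$ on $C_G(u)_{p'}$ is a sum of components in blocks $e$ of $C_G(u)$ with $e^G=B$, that is, with $(\langle u\rangle,e)$ a $B$-Brauer pair. Every such pair is $G$-conjugate to some $(\langle v\rangle,e_{\langle v\rangle})$ with $v\in P$, and conjugation permutes the $d^u_{\chi,\varphi}$. Hence the set of all generalised decomposition numbers of $\chi$ equals, up to adding $0$, the set of $d^u_{\chi,\varphi}$ with $u\in P$ and $\varphi\in\IBr(C_G(u)e_{\langle u\rangle})$. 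The same holds for $B'$ with the same $u\in P$.

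\emph{Step 2: the local perfect isometries act by integral invertible matrices on Brauer characters.} A perfect isometry $I_Q$ is given by a generalised character $\mu$ of $C_{G'}(Q)\times C_G(Q)$. Its $K$-linear extension to class functions preserves the space of functions vanishing outside $p$-regular elements. The integrality (perfectness) conditions show that it induces an isomorphism between the Grothendieck groups of projective modules. By duality, via the Cartan pairing, it therefore induces a $\Z$-linear isomorphism $\Z\IBr(C_G(Q)e_Q)\cong \Z\IBr(C_{G'}(Q)e'_Q)$ of Brauer characters restricted to $p'$-elements. Writing $d^u_G(\chi)=\sum_\varphi d^u_{\chi,\varphi}\varphi$ and applying this integer matrix, I get that each $d^u_{I(\chi),\varphi'}$ is a $\Z$-linear combination of the $d^u_{\chi,\varphi}$, and conversely via the inverse matrix. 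This step is the main obstacle: one has to verify carefully that the restriction of $I_Q$ to $p$-regular class functions really is given by an integral matrix in the bases $\IBr$, and that this matrix is invertible over $\Z$. I would argue this via the standard fact that a perfect isometry restricts to an isometry between the spaces of generalised projective characters, and then take the dual lattice.

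\emph{Step 3: conclude.} Steps 1 and 2 give
$$\Q\bigl(\{d^u_{\chi,\varphi}\}\bigr)=\Q\bigl(\{d^{u}_{I(\chi),\varphi'}\}\bigr),$$
so the two sets have the same conductor. By Theorem \ref{thm1}, applied to $G$ and to $G'$ with $K$ containing enough roots of unity for both groups, this conductor is $c(\chi)_p=c(I(\chi))_p$.
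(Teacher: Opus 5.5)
Your proposal is correct and follows essentially the same route as the paper. In both, an isotypy makes the non-ordinary generalised decomposition numbers of $I(\chi)$ and of $\chi$ into $\Z$-linear combinations of each other via an invertible integral matrix, and Theorem \ref{thm1} then gives the equality. The only differences are in how that middle step is obtained and which notion of isotypy is used. The paper quotes Sambale's Proposition 7.2 for the integrality step, working with Sambale's slightly weaker notion of isotypy, which only requires a bijection between conjugacy classes of Brauer elements rather than equal fusion systems. You instead derive the step by hand from Brou\'e's original definition, using the Second Main Theorem and the fact that a perfect isometry induces an isomorphism between the lattices $\Z\IBr$; so, as written, your argument covers Brou\'e's original notion rather than Sambale's.
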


This will  be proved in Section \ref{thm1-proof}. By work of Rickard 
\cite[Section 7]{Ricksplendid}, and subsequent generalisations such as  \cite{BoXu}, 
a splendid derived equivalence induces an isotypy,  and hence Theorem
\ref{thm15} applies to character bijections induced by splendid derived equivalences.

\medskip
In order to state the extension to stable equivalenes of Morita type, we need the 
following notation.
 For $B$ a block of $\OG$ we denote by $L^0(B)$ the subgroup of 
$\Z\Irr(B)$ of generalised characters which vanish on $p'$-elements of $G$. It
is well-known that a stable equivalence of Morita type betwee two block algebras 
$B$, $B'$ of finite groups $G$, $G'$ induces an isomorphism $L^0(B)\cong$ $L^0(B')$; 
see e.g.  \cite[Proposition 3.1]{KL10}.

\begin{Theorem} \label{thm2}
Let $G$, $G'$ be finite groups and let $B$, $B'$ be blocks of $\OG$, $\CO G'$,
respectively.  Assume that $K$ contains a primitive root of unity of order divisible
by $|G|$ and $|G'|$.
Let $M$ be an indecomposable  $B'$-$B$-bimodule inducing a stable 
equivalence of Morita type having an endopermutation module $V$ as a source when
regarded as an $\CO(G'\times G)$-module.  Assume that the character of $V$ has
values in $\Z$. Denote by $\Phi_M : \Z\Irr(B)\to$
$\Z\Irr(B')$ the group homomorphism induced by the functor $M\ten_{B}-$.
Then, for any $\psi\in$ $\Z\Irr(B)$ we have 
$$c(\Phi_M(\psi))_p = c(\psi)_p.$$
Suppose moreover that there is an isometry $\Phi : \Z\Irr(B)\cong$ $\Z\Irr(B')$
which extends the isometry $L^0(B)\cong$ $L^0(B')$ induced by $M\ten_B-$.
 Then for any $\chi\in$  $\Irr(B)$ we have
$$c(\Phi(\chi))_p=c(\chi)_p.$$
\end{Theorem}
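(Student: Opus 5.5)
The plan is to reduce both claims to Theorem \ref{thm1}, using the description of generalised decomposition numbers of $\Phi_M(\psi)$ in terms of Brauer pairs and the local bimodules attached to $M$.

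\textbf{Local structure of $M$.} Let $P$ and $P'$ be defect groups of $B$ and $B'$. By the standard structure theory for stable equivalences of Morita type with endopermutation source (Puig; cf.\ \cite{LiBookII}), $M$ has a vertex $\Delta\varphi=\{(\varphi(y),y)\mid y\in P\}$ for an isomorphism $\varphi:P\cong P'$ identifying the fusion systems. For each $Q\le P$ with $Q'=\varphi(Q)$, a Brauer construction applied to $M\tenO \mathrm{End}$-twisted source data yields a bimodule $M_Q$. This $M_Q$ induces a Morita equivalence between the block algebras $e_Q\CO C_G(Q)$ and $e'_{Q'}\CO C_{G'}(Q')$ of the local Brauer pairs. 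Its source is the endopermutation module obtained as a ``slice'' of $V$ at $\Delta Q$.

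\textbf{Comparison of decomposition numbers.} Next I compare the generalised decomposition maps. For $u\in P$ with $u'=\varphi(u)$, there is the map $d^u_G:\Z\Irr(B)\to K\IBr(C_G(u),e_u)$ sending $\chi\mapsto\sum_\varphi d^u_{\chi,\varphi}\varphi$. The key identity I would establish is
$$d^{u'}_{G'}(\Phi_M(\psi))=\epsilon_u\cdot \Phi_{\bar M_{\langle u\rangle}}\bigl(d^u_G(\psi)\bigr),$$
where $\Phi_{\bar M_{\langle u\rangle}}$ is the map on Brauer characters induced by the Morita equivalence at $\langle u\rangle$, and $\epsilon_u$ is the value at $u$ of the character of the source $V$, up to sign and a rational scalar. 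This is essentially the computation in Puig's and Broué's theory of isotypies. It is obtained by evaluating the character of $M$ at elements $(u's',us)$, and by using that characters of $\CO(G'\times G)$-modules with endopermutation source are controlled by the vertex-source pair.

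Because $V$ has $\Z$-valued character, $\epsilon_u\in\Q$, and it is nonzero since endopermutation sources have invertible slice characters. The map $\Phi_{\bar M}$ is given by an integer matrix with integer inverse, since it comes from a Morita equivalence and the Brauer characters form bases. Hence the $\Q$-span, and in particular the field generated over $\Q$, of the set $\{d^{u'}_{\Phi_M(\psi),\varphi'}\}$ equals that of $\{d^u_{\psi,\varphi}\}$.

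\textbf{Conclusion of the first claim.} For $u$ not conjugate into $P$, and likewise for $G'$, the generalised decomposition numbers vanish by Brauer's second main theorem. Taking all $u$ and applying Theorem \ref{thm1} on both sides therefore gives $c(\Phi_M(\psi))_p=c(\psi)_p$.

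\textbf{Second claim.} The second claim does not follow formally, since $\Phi$ need not be the map $\Phi_M$. The plan is to reduce it to the first claim as follows.

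First, reduce to non-ordinary decomposition numbers. For $u\neq1$ the generalised decomposition numbers $d^u_\chi$ are determined by the restriction of $\chi$ to $p$-singular elements. These values can be detected by $L^0(B)$: the class function $\chi$ on $p$-singular classes is a rational combination (with denominator a power of $p$, namely via multiplication by a $p$-power multiple) of elements of $L^0(B)$. For instance, $|P|\cdot\chi$ minus its projection onto projective characters lies in $\Q\ten L^0(B)$.

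Next, handle the ordinary decomposition numbers $d^1_{\chi,\varphi}$. These lie in $\Z$ and so do not affect the conductor.

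Finally, since $\Phi$ and $\Phi_M$ agree on $L^0(B)$, the functions $\Phi(\chi)$ and $\Phi_M(\chi)$ differ by a generalised character vanishing on $p$-singular elements. Hence, for $u\neq1$, they have the same generalised decomposition numbers. Theorem \ref{thm1} together with the first claim then gives $c(\Phi(\chi))_p=c(\Phi_M(\chi))_p=c(\chi)_p$.

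\textbf{Expected obstacle.} The hardest step is the character identity in the comparison step. One must show precisely that the scalar coming from the endopermutation source is rational, which is where the hypothesis that $V$ has $\Z$-valued character enters. One must also justify the vanishing of $\Phi(\chi)-\Phi_M(\chi)$ on $p$-singular elements; I would first try to do this using the orthogonality between $L^0$ and projective characters.
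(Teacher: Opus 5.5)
Your overall architecture is sound: both statements reduce, via Theorem~\ref{thm1} and the integrality of the $d^1_{\chi,\varphi}$, to comparing non-ordinary generalised decomposition numbers. But for the first statement the whole content of the theorem sits in your identity $d^{u'}_{G'}(\Phi_M(\psi))=\epsilon_u\Phi_{\bar M_{\langle u\rangle}}(d^u_G(\psi))$. You assert this identity rather than derive it, and ``evaluate $\chi_M$ at $(u's',us)$ and use that characters are controlled by the vertex-source pair'' is not yet an argument.

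To make your route work you would need two inputs. First, Puig's theorem that such a stable equivalence induces Morita equivalences between the centraliser blocks at \emph{nontrivial} $Q$. At $Q=1$ there is only $M$ itself, so ``for each $Q\le P$'' is wrong as stated, though harmless. Second, a character formula for modules with endopermutation source that expresses $\chi_M$ at $p$-singular elements through these local bimodules. The scalar this produces is $\chi_{V_Q}(u)$, for $V_Q$ the vertex-$Q$ summand of $\Res^P_Q(V)$, and the $\Z$-hypothesis forces it to be $\pm1$.

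The paper avoids both ingredients by a direct local computation. With $Q=\langle u\rangle$:
\begin{itemize}
\item It uses Puig's description $d^u_{\chi,\varphi}=\chi(uj)$ for $j$ a primitive local idempotent in $A^Q$.
\item It deduces from $M\tenB M^*\cong B'\oplus X'$ that $j'M$ has a unique non-projective indecomposable $\OQ$-$B$-summand $N$ (Lemma~\ref{lem7}).
\item Using $M\mid B'i'\tenOP\Ind^{P\times P}_{\Delta P}(V)\tenOP iB$, Lemma~\ref{lem5} and fusion stability of $V$, it identifies $N$ with the $\Delta Q$-vertex summand of $V_Q\tenO jB$ for a suitable local $j$ (Lemma~\ref{lem75}).
\end{itemize}
Now let $U$ be a $B$-lattice with character $\chi$. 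After applying $-\tenB U$, all remaining summands become $\OQ$-modules relatively projective with respect to proper subgroups of $Q$, so their characters vanish at $u$. Hence the character of $M\tenB U$ evaluated at $uj'$ equals $\chi_{V_Q}(u)\chi(uj)=\pm\chi(uj)$. This is exactly the step your proposal leaves open.

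For the second statement your route differs from the paper's and is more elementary. The paper invokes \cite[Proposition 3.3]{KL10} to know that $\sum_{\chi}\Phi(\chi)\chi^*-\chi_M$ is a generalised projective character of $G'\times G$, whence $\Phi(\psi)-\Phi_M(\psi)\in\Pr(B')$. The step you flag is easy, but it needs more than agreement on $L^0(B)$:
\begin{itemize}
\item Since $\Phi$ is an isometry with $\Phi(L^0(B))=L^0(B')$, its $\Q$-linear extension carries $\Q\Pr(B)=(\Q L^0(B))^\perp$ onto $(\Q L^0(B'))^\perp=\Q\Pr(B')$.
\item $\Phi_M(\Pr(B))\subseteq\Pr(B')$ because $M$ is projective as a left $B'$-module.
\end{itemize}
Write $\chi=\pi+\lambda$ with $\pi\in\Q\Pr(B)$ and $\lambda\in\Q L^0(B)$. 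Then $\Phi(\chi)-\Phi_M(\chi)=\Phi(\pi)-\Phi_M(\pi)\in\Q\Pr(B')$, which vanishes on $p$-singular elements. So $\Phi(\chi)$ and $\Phi_M(\chi)$ have the same non-ordinary generalised decomposition numbers, and your detour through $|P|\cdot\chi$ is unnecessary. Once this is written out, the second half is complete and does not need the perfectness of $\Phi$. The genuine gap is the unproved identity in the first half.
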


This will be proved in Section \ref{thm2-proof}.   
By \cite[Proposition 3.3]{KL10},
the isometry $\Phi$, if it exists, is automatically perfect in the sense of 
Brou\'e \cite{Broue88}.   If $V$ has an endosplit $p$-permutation resolution, then
$\Phi$ is induced by a $p$-permutation equivalence, hence by an isotypy, so the
last statement follows in that case also from Theorem \ref{thm15}.
The hypothesis on
 $V$ having character values in $\Z$ is necessary: given an $\OP$-module
 $V $ of $\CO$-rank $1$, tensoring over $\OP$ by the $\OP$-$\OP$-bimodule 
 $\Ind^{P\times P}_{\Delta P}(V)$
 induces a Morita equivalence on the module category of $\OP$. As an 
 $\CO(P\times P)$-module, $\Ind^{P\times P}_{\Delta P}(V)$ has 
 endopermutation source $V$, and the induced Morita equivalence  sends the trivial 
 $\OP$-module $\CO$ to $V$. 
 The equality $1=c(\CO)_p=c(V)_p$ holds precisely when the character values of $V$
 belong to $\Z$. The isometry $\Phi$ exists if the blocks $B$, $B'$ have a cyclic defect 
 group (this goes back to Dade \cite{Dadecyclic}; for the fact that this isometry is
 perfect see  \cite{Licycliccentre} or \cite[Theorem 11.10.2]{LiBookII}).

\begin{Corollary} \label{Cor2}
Let $B$ be a block of $\OG$ with a cyclic defect group $P$.
 Denote by $E\leq \Aut(P)$ an interial quotient of $B$.
Then there is an isotypy between $B$ and $\CO(P\rtimes E)$, and for any such
isotypy, the induced character bijection $\gamma : \Irr(B)\cong$ $\Irr(\CO(P\times E))$
satisfies $c(\chi)_p=$ $c(\gamma(\chi))_p$ for all $\chi\in$ $\Irr(B)$.
\end{Corollary}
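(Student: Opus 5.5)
We split Corollary \ref{Cor2} into two parts: first the existence of an isotypy, then the equality of $p$-parts of conductors for any isotypy. The second part is immediate from Theorem \ref{thm15}, so essentially all the work lies in the first.

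\emph{Existence.} Let $B$ have cyclic defect group $P$ and inertial quotient $E$, so $E$ is a cyclic $p'$-subgroup of $\Aut(P)$. Let $c$ be the block of $N_G(P)$ corresponding to $B$ under the Brauer correspondence. By Rouquier's theorem, there is a splendid Rickard equivalence between $B$ and $c$; this holds for cyclic defect groups, constructed via two-term complexes attached to the Brauer tree. By Rickard \cite[Section 7]{Ricksplendid}, such a splendid equivalence induces an isotypy between $B$ and $c$.

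Next we pass from $c$ to $\CO(P\rtimes E)$. The block $c$ of $N_G(P)$ has normal defect group $P$, so by Külshammer it is Morita equivalent to a twisted group algebra $\CO_\alpha(P\rtimes E)$. Since $E$ is cyclic, $H^2(E,k^\times)=0$, so the twist is trivial. This Morita equivalence is given by a bimodule with trivial (permutation) source; indeed, for $P$ normal, the source algebra of $c$ is $\CO(P\rtimes E)$ itself. A Morita equivalence with trivial source is a splendid equivalence concentrated in one degree, so it too induces an isotypy. Composing the two isotypies gives an isotypy between $B$ and $\CO(P\rtimes E)$, since isotypies compose.

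An alternative route, closer to the paper, is to use the remark after Theorem \ref{thm2}. Dade's isometry is perfect and extends the isometry on $L^0$ coming from the stable equivalence of Morita type with $N_G(P)$. Combined with compatible local isometries for the subgroups of $P$, this yields an isotypy. The Rouquier route is cleaner, and we plan to cite it.

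\emph{Conductors.} Given any isotypy between $B$ and $\CO(P\rtimes E)$ with induced bijection $\gamma$, Theorem \ref{thm15} applies directly and gives $c(\gamma(\chi))_p=c(\chi)_p$ for all $\chi\in\Irr(B)$. The signs in a perfect isometry are harmless, because $c(-\chi)=c(\chi)$.

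The main obstacle is entirely the existence statement, specifically ensuring that the reduction from $c$ to $\CO(P\rtimes E)$ preserves splendidness and that the twist vanishes. Both are standard for cyclic defect groups, so we would cite them rather than reprove them.
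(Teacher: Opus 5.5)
Your proof is correct, but it takes a different route from the paper's. The paper does not use derived equivalences or Theorem \ref{thm15} here. It takes from the author's th\`ese \cite{Lithese} (Th\'eor\`emes (13.2), (13.6)), which builds on Dade's description of generalised decomposition numbers, an isometry $\Z\Irr(B)\cong\Z\Irr(\CO(P\rtimes E))$ under which the non-ordinary generalised decomposition numbers of corresponding characters agree up to sign. It notes that this isometry is perfect by \cite[Proposition 3.3]{KL10}, and then reads off $c(\chi)_p=c(\gamma(\chi))_p$ directly from Theorem \ref{thm1}.

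You instead build the isotypy from three ingredients: Rouquier's splendid Rickard equivalence between $B$ and its Brauer correspondent, Rickard's result that splendid equivalences induce isotypies, and the trivial-source Morita equivalence between that correspondent and its source algebra $\CO(P\rtimes E)$. You then get the conductor statement from Theorem \ref{thm15}.

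Your route gives an isotypy in the literal sense. Because Theorem \ref{thm15} applies to every isotypy, it also covers the clause ``for any such isotypy''; the paper's short argument treats explicitly only the isometry it quotes, with the general clause implicit via Theorem \ref{thm15}. The price is dependence on Rouquier's much deeper theorem and on a composition step: you must match Brauer elements across the two isotypies, or check that the composed complex is still splendid. The paper needs only classical information on generalised decomposition numbers together with Theorem \ref{thm1}.

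One minor correction concerns the twist. For $E$ cyclic of order $e$ one has $H^2(E,k^\times)\cong k^\times/(k^\times)^e$. This vanishes when $k$ is algebraically closed, but not for every $k$; for instance it is nonzero for finite $k$ with $e$ dividing $|k|-1$. What you actually need is that $k$ is a splitting field, which holds under the standing assumption on roots of unity in $K$. Then $k_\alpha E\cong k[x]/(x^e-\lambda)$ must be split, which forces $\lambda\in(k^\times)^e$, so $\alpha$ is trivial.
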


Corollary \ref{Cor2} is a special case of a more general scenario
of blocks with an abelian defect group and an inertial quotient which acts freely
on the nontrivial elements of a defect group. For the sake of completeness, 
we state this.

\begin{Corollary}\label{Cor3}
Let $B$ be a block of a finite group algebra $\OG$ with an abelian defect group $P$ 
and inertial quotient $E$ acting freely on $P\setminus \{1\}$. Let $C$ be the
block of $\CO N_G(P)$ which is the Brauer correspondent of $B$.
Assume that $K$ contains a primitive $|G|$-th root of unity.
Suppose that $|\Irr(B)|=|\Irr(C)|$. Then there is a perfect isometry between
$B$ and $C$ such that the induced bijection $\gamma : \Irr(B)\cong$ $\Irr(C)$
satisfies $c(\chi)_p=c(\gamma(\chi))_p$ for all $\chi\in$ $\Irr(B)$.
\end{Corollary}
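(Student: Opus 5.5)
The plan is to combine the known structure of blocks with abelian defect group and Frobenius inertial quotient with Theorem \ref{thm2}. Since $P$ is abelian and $E$ acts freely on $P\setminus\{1\}$, every nontrivial subgroup $Q$ of $P$ satisfies $C_E(Q)=1$. Hence for every nontrivial $u\in P$ the block $e_u$ of $C_G(u)$ (the Brauer pair block) has inertial quotient $1$ and is nilpotent. The same holds on the local side for $C$. By the classical argument of Puig (and Brou\'e for the resulting isometry), there is a stable equivalence of Morita type between $B$ and $C$, induced by a bimodule $M$ which is a direct summand of $B\,\OG\,C$ restricted suitably, i.e.\ a summand of $\Ind^{G\times N_G(P)}_{\Delta N_G(P)}(\CO)$ cut by $b\otimes c$. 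It is obtained by gluing the local Morita equivalences between the nilpotent blocks $e_u$ and their Brauer correspondents. This bimodule has diagonal vertex $\Delta P$ and an endopermutation source $V$. I would take this as the known input, citing Puig's work on nilpotent blocks and the gluing result (e.g.\ as presented in \cite{LiBookII}).

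The first key step is to make sure the hypothesis of Theorem \ref{thm2} on $V$ holds, namely that the character of $V$ has values in $\Z$. Here I would use the fact that $M$ can be chosen as the indecomposable summand of $b\OG c$ with vertex $\Delta P$. This summand has trivial source, since it is a $p$-permutation bimodule. Then $V=\CO$ and the integrality condition is automatic. If only the endopermutation version were available, I would instead use the freedom to twist the local Morita equivalences by rank-one characters, together with the fact that the sources of nilpotent block equivalences can be chosen with torsion-free class in the Dade group. Either route leads to a $\Z$-valued character.

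The second step produces the isometry $\Phi$. The stable equivalence induces $L^0(B)\cong L^0(C)$. Under the hypothesis $|\Irr(B)|=|\Irr(C)|$, I would invoke the known extension result for this Frobenius inertial quotient situation: an isometry $L^0(B)\cong L^0(C)$ coming from such a stable equivalence extends to an isometry $\Z\Irr(B)\cong\Z\Irr(C)$ when the numbers of irreducible characters agree. This is the argument of \cite{KL10}, using that $L^0$ has a basis controlled by the $E$-orbits on $\Irr(P)$ and that the complement is spanned by a character orthogonal to $L^0$ up to sign. The isometry is then automatically perfect by \cite[Proposition 3.3]{KL10}, and it sends irreducible characters to $\pm$ irreducible characters. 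This gives a bijection $\gamma$, after adjusting signs.

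The final step applies the second statement of Theorem \ref{thm2}: $c(\Phi(\chi))_p=c(\chi)_p$. Since signs do not affect conductors, we get $c(\gamma(\chi))_p=c(\chi)_p$. The main obstacle is the extension step, i.e.\ ensuring that the isometry on $L^0$ actually extends given only $|\Irr(B)|=|\Irr(C)|$. I would first try to reduce to the lattice-theoretic lemma that an isometry between $L^0$'s of two blocks with the same number of characters extends whenever $L^0$ has corank one with the orthogonal complement of $L^0$ in $\Q\Irr$ spanned by a vector with entries $\pm1$ in the appropriate normalisation, as in the Frobenius case.
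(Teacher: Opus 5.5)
Your plan to deduce the corollary from the second part of Theorem~\ref{thm2} is viable in principle; the paper itself remarks that the corollary is a special case of that theorem. However, neither of the two steps that carry the weight works as you describe it.

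\textbf{First gap: the source of $M$.} The claim that $M$ may be taken to be the trivial source summand $M_0$ of $b\OG c$ with vertex $\Delta P$ is false in general, so you do not get $V=\CO$ for free. A counterexample already exists for $E=1$.
\begin{itemize}
\item Let $p=3$, $G=\mathrm{SL}_2(3)\times Z$ with $Z\cong C_3$, and $B=B_1\tenO\CO Z$, where $B_1$ is the defect-one block of $\CO\mathrm{SL}_2(3)$ containing the three characters of degree $2$. Then $P\cong C_3\times C_3$, $C\cong\OP$, and $|\Irr(B)|=|\Irr(C)|=9$.
\item Since $Z$ is central and $M_0$ is a summand of $\OG\otimes_{\OP}\CO N_G(P)$, the group $\Delta Z$ acts trivially on $M_0$, and hence on $M_0\otimes_C M_0^*$.
\item So if $M_0\otimes_C M_0^*\cong B\oplus X$ with $X$ projective, then $X=0$, and $M_0$ would be a Morita equivalence with trivial source.
\item Such an equivalence sends the trivial $kC$-module to a $p$-permutation module, namely the unique simple $kB$-module. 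But that module is the natural $2$-dimensional module of $\mathrm{SL}_2(3)$ with $Z$ acting trivially. It is $2$-dimensional with nontrivial $P$-action, hence not a permutation module since $p=3$. This is a contradiction.
\end{itemize}
In general the source reflects the sources $V_Q$ of the local nilpotent blocks. Their source algebras are $\End_\CO(V_Q)\tenO\OP$ with $V_Q$ possibly nontrivial, which is exactly why Theorem~\ref{thm2} is stated for endopermutation sources with $\Z$-valued character. Your fallback points the right way, but the relevant normalisation is not a ``torsion-free class in the Dade group''. What you need is twofold. You need an $E$-stable rank one twist making $V$ defined over the unramified subring of $\CO$, hence with $\Z$-valued character. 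You also need a check that the twisted bimodule still induces a stable equivalence of Morita type. This can be arranged, since an $E$-stable linear character of $P$ defines an automorphism of $\CO(P\rtimes E)$.

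\textbf{Second gap: the extension step.} You rightly call this the main obstacle, but it remains open, and the lattice lemma you propose rests on a false premise. $L^0(B)$ is the kernel of the decomposition map, so it has corank $\ell(B)$ in $\Z\Irr(B)$. Since $|\Irr(B)|=|\Irr(C)|$ and $L^0(B)\cong L^0(C)$, we get $\ell(B)=\ell(C)=\ell(P\rtimes E)$, which is the number of conjugacy classes of $E$. So $L^0$ has corank one only when $E=1$. The corank one argument belongs to the one-simple-module situation of \cite{KL10}, not to the Frobenius case. For $E\neq 1$, the existence of an isometry $\Z\Irr(B)\cong\Z\Irr(C)$ extending $L^0(B)\cong L^0(C)$, under the sole hypothesis $|\Irr(B)|=|\Irr(C)|$, is a genuine theorem about blocks with Frobenius inertial quotient, not a formal lattice fact.

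\textbf{The paper's route.} The paper avoids both issues and does not use Theorem~\ref{thm2} at all. It uses that $C$ has source algebra $\CO(P\rtimes E)$ and quotes the known result that, when $|\Irr(B)|=|\Irr(C)|$, there is a perfect isometry between $B$ and $C$ preserving the non-ordinary generalised decomposition numbers up to sign. It then concludes directly from Theorem~\ref{thm1}.
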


While both of these two corollaries are special cases of Theorem \ref{thm2},
it should be noted that  for these special cases  it has long
been known that   perfect isometries 
preserve generalised decomposition numbers up to signs, so preserve $p$-parts of
conductors by Theorem \ref{thm1}. We will describe this briefly, including
some pointers to the literature,
at the end of Section \ref{thm2-proof}. 

\begin{Remark}
The Bonnaf\'e-Dat-Rouquier Morita equivalences for $\ell$-blocks 
from  \cite[Theorem 7.5]{BDR}  lift to splendid derived equivalences 
thanks to \cite[Theorem 7.6]{BDR}), and hence the character bijections 
induced by these Morita equivalences  preserve $\ell$-parts of conductors 
by Theorem \ref{thm15}, making use of  the fact that a splendid derived 
equivalence induces an isotypy.   It is not known whether in general these
Morita equivalences are induced by bimodules with endopermutation sources,
so we do not know whether Corollary \ref{Cor1} applies in general to these
Morita equivalences. 
\end{Remark}

\begin{Remark}
Let $G$ be a finite group and $B$ a block of $\OG$ with defect group $P$.
Let $\chi\in$ $\Z\Irr(B)$. We trivially have $c(\chi)_\ell \geq c(\Res^G_{N_G(P)}(\chi))_\ell$,
but the above results do not address directly the question when this is an equality. 
There are two issues: first of all, 
$\Res^G_{N_G(P)}(\chi)$  may have components in blocks other than the Brauer 
correspondent $C$, and second, even if one were to truncate the restriction by 
the block idempotent of $C$, this will generally not yield a perfect isometry. 
We mention in Section \ref{Res-Section}  some cases in which the above 
results do yield statements on $p$-parts of conductors under restriction to certain 
subgroups and truncated restriction to some blocks of subgroups.
\end{Remark}

\section{Proof of Theorem \ref{thm1} and Theorem \ref{thm15}} 
\label{thm1-proof}

We use without further comment the well-known fact  that for any two positive 
integers $m$, $n$
we have $\Q(\zeta_m)\cap\Q(\zeta_n)=$ $\Q(\zeta_{\gcd(m,n)})$. Thus if a
set $S$ of cyclotomic integers is contained in $\Q(\zeta_n)$ for some positive
integer $n$, then $c(S)$ divides $n$.

\begin{proof}[Proof of Theorem \ref{thm1}]
Denote by $a$ the non-negative integer satisfying $p^a=c(\chi)_p$.  
The right side in the statement of Theorem \ref{thm1}  is  a power of $p$ 
since all generalised  decomposition numbers are contained in a cyclotomic 
field  of a $p$-power  order root of unity. Denote by $d$ the non-negative integer
such that $p^d= c\left(\{d^u_{\chi,\varphi}\ |\ u\in G_p,\ \varphi\in\IBr(C_G(u))\}\right)$.
We need to show that $a=d$. 

In  the defining equation above of the 
generalised decomposition numbers $d^u_{\chi, \varphi}$, the numbers
$\varphi(s)$ are contained in $\Q(\zeta')$, where $\zeta'$ is a root of unity
of order the $p'$-part of $|G|$. Thus $\chi(us)$ is contained in
$\Q(\zeta_{p^d}, \zeta')$, and hence all  vlaues of $\chi$ are
contained in $\Q(\zeta_{p^dn'})$, where $n'=|G|_{p'}$. 
Since the  values of $\chi$ are also all contained in $\Q(\zeta_{p^am})$
and since $p^am$ is minimal  with this property, it follows that $a\leq d$.

For the other inequality we use Brauer's reciprocity, as descibed in 
\cite[5.14.7, 5.14.8]{LiBookI}. 
Let $u\in G_p$, and denote by $\langle -, -\rangle'$ the
bilinear map on $K$-valued class functions on $p'$-elements of $C_G(u)$ given by
$\langle \alpha, \beta\rangle'=$ $\frac{1}{|C_G(u)|} \sum_{s\in C_G(u)_p'} \ 
\alpha(s)\beta(s^{-1}$, where $\alpha$, $\beta$ are $K$-valued class funtions
defined on $G_{p'}$. Let $\psi\in$ $\IBr(C_G(u)$, and let $\Psi$ be the character
of a projective cover of a simple $kC_G(u)$-module with Brauer character $\psi$.
Brauer's reciprocity, applied to $C_G(u)$, states that for any $\varphi\in$ $\IBr(C_G(u))$ 
the number $\langle \varphi, \Psi\rangle'$ is equal to $1$ if $\psi=\varphi$ and equal to
zero otherwise. Denoting by $d^u_G(\chi)$ the class function on $C_G(u)_{p'}$
defined by $d^u_G(\chi)(s)=\chi(us)$ for all $s\in C_G(u)_{p'}$, it follows from
Brauer's reciprocity that
$$\frac{1}{|C_G(u)|} \sum_{s\in C_G(u)_{p'}} \chi(us) \Psi(s^{-1})\ = \
\langle d^u_G(\chi), \Psi\rangle'\ =\ 
\sum_{\varphi\in\IBr(C_G(u)}\ d^u_{\chi,\varphi} \langle \varphi, \Psi\rangle'\ = \
d^u_{\chi,\psi}.$$
Now the values $\Psi(s^{-1})$ are all contained in $\Q(\zeta')$, so the expression
on the left side shows that the numbers
$d^u_{\chi,\psi}$ are contained in $\Q(\zeta_{p^a}\zeta')$, for
all $u\in G_p$ and all $\psi\in$ $\IBr(C_G(u))$. Since these numbers are all
in $\Q(\zeta_{p^d})$ with $d$ minimal with this property, it follows that
$d\leq a$. 
\end{proof}

As noted earlier, Corollary \ref{Cor05} is an immediate consequence of
Theorem \ref{thm1}. We will give in the next Section an
independent proof of this Corollary, making use of a description of
generalised decomposition numbers due to Puig (cf. \cite[Theorem 5.15.3]{LiBookI}),
which also feeds into the proof of Corollary \ref{Cor1} in the next Section.

\begin{Remark}
With the notation of Theorem \ref{thm1}, suppose that $\chi$ belongs to 
$\Z\Irr(B)$  for some block $B$ of $\OG$.
By Brauer's Second Main Theorem, the numbers
$d^u_{\chi,\varphi}$ are zero unless $u$ belongs to a defect group of $B$ and
$\varphi$ belongs to a block $f$ of $kC_G(u)$ such that $(\langle u\rangle, f)$ is
a $B$-Brauer pair.  The numbers $d^1_{\chi\, \varphi}$ are rational integers. 
\end{Remark}

There are several versions of isotypies in the literature. Brou\'e's original version in
\cite{Broue90} requires the blocks to have isomorphic fusion systems and asks
for   the compatibility of generalised decomposition maps with respect to centralisers 
of cyclic subgroup (as opposed to centralisers of all $p$-subgroups in later versions 
in the literature, such as \cite{BoXu}, which in addition no longer requires the equality of fusion 
systems). Sambale's version \cite[Section 7]{Sa20}  replaces the equality of fusion systems
by a slightly weaker condition, requiring a suitable bijection between conjugacy classes of
Brauer elements, and this is sufficient for the present paper.

\begin{proof}[Proof of Theorem \ref{thm15}]
By definition of an isotypy, there is a bijection $(u,e)\mapsto (u',e')$ between
$G$-conjugacy class representatives of $B$-Brauer elements and
$G'$-conjugacy classes of $B'$-Brauer elements under which $(1_B,B)$ is mapped
to $(1_{B'}, B')$. Let $\chi$ in $\Irr(B)$, and $(u,e)$ a $B$-Brauer element.
By \cite[Proposition 7.2]{Sa20}, the generalised decomposition numbers
$d^{u'}_{I(\chi), \varphi'}$, with $\varphi'\in$ $\IBr(C_{G'}(u'), e')$ are 
$\Z$-linear combinations of the generalised decomposition numbers
$d^u_{\chi, \varphi}$, with $\varphi\in$ $\IBr(C_G(u),e)$. It follows from
Theorem \ref{thm1} that $c(I(\chi))_p\leq c(\chi)_p$. Exchanging the roles of $B$
and $B'$   (or using the fact that the above mentioned coefficient in $\Z$ are part 
of an invertible matrix) shows the equality.
\end{proof}

\section{On  generalised decomposition numbers} \label{gendec-Section}

We refer to \cite[Section 5.4]{LiBookI} for the Brauer construction, and for
basic terminology regarding Puig's notion of pointed groups we refer
to the exposition in \cite[Section 5.5]{LiBookI}.
Let $G$ be a finite group, $B$ a block of $\OG$, and suppose that $K$ is a splitting
field for $K\tenO B$. We deote by $\Pr(B)$ the subgroup of $\Z\Irr(B)$ generated
by the characters of projective indecomposable $B$-modules. The subgroup
$L^0(B)$ of $ \Z\Irr(B)$ consists of all generalised characters which are perpendicular
to $\Pr(B)$ with respect to the standard scalar product on $\Z\Irr(B)$. 
We will use   Puig's description 
of generalised decomposition numbers (cf. \cite[Theorem 5.15.3]{LiBookI}):
for $\chi\in$ $\Z\Irr(B)$, $u$ a 
$p$-element in $G$ and $\varphi\in$ $\IBr(\CO C_G(u))$ we have
$$d^u_{\chi,\varphi} = \chi(uj),$$
where $j\in$ $(\OG)^{\langle u\rangle}$ belongs to the local point of $\langle u\rangle$
on $\OG$ with the property that $\Br_{\langle u\rangle}(j)$ is a primitive 
idempotent in $kC_G(\langle u\rangle)$ which does not annihilate the simple 
$kC_G(\langle u\rangle)$-modules with Brauer character $\varphi$. One immediate
consequence of this description of generalised decomposition numbers  is the 
following fact, stated for future reference:

\begin{Lemma} \label{lem01}
Let $G$ be a finite group and $\chi\in$ $\Z\Irr(G)$. For any  $u\in G_p$
and any idempotent $f$ in $\CO C_G(u)$ we have $c(\chi)_p\geq$ $c(\chi(uf))$.
\end{Lemma}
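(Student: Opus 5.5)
The plan is to reduce the statement to Theorem \ref{thm1} via Puig's description of generalised decomposition numbers recalled just above. Set $p^a=c(\chi)_p$. By Theorem \ref{thm1}, all the numbers $d^u_{\chi,\varphi}$, for $u\in G_p$ and $\varphi\in\IBr(C_G(u))$, lie in $\Q(\zeta_{p^a})$. So it suffices to show that $\chi(uf)$ is a $\Z$-linear combination, in fact a sum, of such numbers. Then $\chi(uf)\in\Q(\zeta_{p^a})$, and by the fact on intersections of cyclotomic fields recalled at the start of Section \ref{thm1-proof}, $c(\chi(uf))$ divides $p^a$, giving the inequality. Here $\chi$ is extended $K$-linearly to $\OG$, so $\chi(uf)=\sum_s f_s\chi(us)$ for $f=\sum_s f_s s$.

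First, I write $f$ as a sum $f=\sum_i j_i$ of pairwise orthogonal primitive idempotents $j_i$ in $\CO C_G(u)$; this is possible since $\CO C_G(u)$ is a finite $\CO$-algebra. If $f=0$ there is nothing to prove. By linearity, $\chi(uf)=\sum_i\chi(uj_i)$, so it suffices to treat a single primitive idempotent $j$ of $\CO C_G(u)$.

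Second, I check that such a $j$ satisfies the hypotheses in Puig's description.
\begin{itemize}
\item Since $u$ is central in $C_G(u)$, $j$ commutes with $u$, so $j\in(\OG)^{\langle u\rangle}$.
\item The Brauer homomorphism $\Br_{\langle u\rangle}$ restricted to $\CO C_G(\langle u\rangle)\subseteq(\OG)^{\langle u\rangle}$ is reduction modulo $J(\CO)$ onto $kC_G(u)$.
\item Primitive idempotents of $\CO C_G(u)$ reduce to primitive idempotents of $kC_G(u)$, by lifting of idempotents.
\end{itemize}
Hence $\Br_{\langle u\rangle}(j)$ is a nonzero primitive idempotent of $kC_G(u)$. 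Therefore $j$ is primitive in $(\OG)^{\langle u\rangle}$ as well: a decomposition of $j$ there would, after applying $\Br_{\langle u\rangle}$, give a decomposition of $\Br_{\langle u\rangle}(j)$ in which one summand is nonzero and the others lie in the kernel. Any idempotent in the kernel of the Brauer map is relatively projective from proper subgroups of $\langle u\rangle$ and so cannot sit as a summand of $j$ together with a local part; to avoid this subtlety one can argue via points instead. In any case, $j$ belongs to a local point of $\langle u\rangle$ on $\OG$. Moreover, $\Br_{\langle u\rangle}(j)$ does not annihilate the simple $kC_G(u)$-module whose projective cover is $kC_G(u)\Br_{\langle u\rangle}(j)$; call its Brauer character $\varphi$. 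Puig's formula then gives $\chi(uj)=d^u_{\chi,\varphi}$.

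The only step needing care is the local-point verification in the second step, namely that a primitive idempotent of $\CO C_G(u)$ lies in a local point of $\langle u\rangle$ on $\OG$. If the point-theoretic argument gets awkward, I would instead bypass it with Brauer's reciprocity as in the proof of Theorem \ref{thm1}. Expand $\chi(us)=\sum_\varphi d^u_{\chi,\varphi}\varphi(s)$ for $s\in C_G(u)_{p'}$, and use the standard fact that $\chi(uj)$ depends only on the $p'$-parts. This yields $\chi(uj)=\sum_\varphi d^u_{\chi,\varphi}\,\varphi(j)$. Here $\varphi(j)$ is a nonnegative integer, the multiplicity of $\varphi$ in the Brauer character of $j$ acting on the corresponding simple module, and it equals $1$ for one $\varphi$ and $0$ otherwise. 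Either way, $\chi(uf)$ is an integer combination of generalised decomposition numbers, and the conclusion follows from Theorem \ref{thm1}.
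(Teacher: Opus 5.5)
Your overall reduction matches what the paper intends: split $f$ into orthogonal primitive idempotents of $\CO C_G(u)$, identify each $\chi(uj)$ with a generalised decomposition number (this is Lemma \ref{lem05}), and finish with Theorem \ref{thm1}. The gap is in how you justify that identification. You claim that a primitive idempotent $j$ of $\CO C_G(u)$ stays primitive in $(\OG)^{\langle u\rangle}$ and so lies in a local point. This is false. The step that fails is precisely your assertion that an idempotent in the kernel of $\Br_{\langle u\rangle}$ cannot be a summand of $j$ alongside a local part.

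Take $p=2$, $G=S_3$, $u=(12)$. Then $C_G(u)=\langle u\rangle$, so $\CO C_G(u)$ is local and $j=1$ is primitive in it. Yet $1=e_0+e_1$ in $Z(\OG)\subseteq(\OG)^{\langle u\rangle}$, where $e_1=\frac{1}{3}(2-(123)-(132))$ is the idempotent of the defect zero block. Here $\Br_{\langle u\rangle}(e_1)=0$ and $\Br_{\langle u\rangle}(e_0)=1$. So $j$ belongs to no point of $\langle u\rangle$ on $\OG$, and Puig's formula cannot be applied to $j$ directly.

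The missing idea is that such non-local summands contribute nothing at $u$.
\begin{itemize}
\item Decompose $j$ into primitive idempotents in $(\OG)^{\langle u\rangle}$. Since $\Br_{\langle u\rangle}(j)$ is primitive in $kC_G(u)$, exactly one summand $j_1$ has nonzero (hence equal) Brauer image, and $j_1$ is local.
\item Every other primitive summand $i$ has $\Br_{\langle u\rangle}(i)=0$. By Rosenberg's lemma, $i\in(\OG)^{\langle u\rangle}_R$ for some proper subgroup $R$ of $\langle u\rangle$.
\item By Higman's criterion, $iX$ is a relatively $R$-projective $\CO\langle u\rangle$-module for any $\OG$-lattice $X$. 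Hence its character vanishes at the generator $u\notin R$.
\end{itemize}
Therefore $\chi(uj)=\chi(uj_1)=d^u_{\chi,\varphi}$, which is exactly what Lemma \ref{lem05} records.

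Your Brauer-reciprocity fallback also works, and it handles arbitrary idempotents $f$ at once, with no need to decompose $f$ or to use Puig's description. However, the claim that ``$\chi(uj)$ depends only on the $p'$-parts'' needs an actual argument, for instance the following.
\begin{itemize}
\item Put $H=C_G(u)$. Since $u\in Z(H)$, the function $\theta(h)=\chi(uh)$ is a class function on $H$, and $\theta(f)=\chi(uf)$.
\item For $\lambda\in\Irr(H)$ afforded by $V_\lambda$, one has $\lambda(f)=\dim_K fV_\lambda=\frac{1}{|H|}\sum_{h\in H}\lambda(h)\Psi_f(h^{-1})$, where $\Psi_f$ is the character of $\CO Hf$. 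By linearity the same formula holds for $\theta$.
\item $\Psi_f$ vanishes off $H_{p'}$ and is a nonnegative integer combination of characters of projective indecomposable modules. The computation in the proof of Theorem \ref{thm1} then exhibits $\chi(uf)$ as a nonnegative integer combination of the numbers $d^u_{\chi,\psi}$.
\end{itemize}
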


We mention another immediate consequence of standard properties of idempotents.

\begin{Lemma} \label{lem05}
Let $\chi\in$ $\Z\Irr(B)$, $u\in G_p$, and $\varphi\in$  $\IBr(C_G(u))$.
Let $f$ be a primitive idempotent in $\CO C_G(u)$ such that $\CO C_G(u)f$
is a projective cover of a simple $kC_G(u)$-module with Brauer character $\varphi$.
Then $d^u_{\chi, \varphi} = \chi(uf)$.
\end{Lemma}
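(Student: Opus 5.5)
The plan is to reduce Lemma \ref{lem05} to Puig's description $d^u_{\chi,\varphi}=\chi(uj)$ recalled above. That formula is stated for an idempotent $j\in(\OG)^{\langle u\rangle}$ in the local point of $\langle u\rangle$ on $\OG$ whose Brauer image $\Br_{\langle u\rangle}(j)$ is a primitive idempotent of $kC_G(u)$ not annihilating the simple module with Brauer character $\varphi$. So it suffices to show that the given $f$ is such an idempotent, and that $\chi(uj)$ does not depend on the choice of $j$ within its point.

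First, $f$ lies in $\CO C_G(u)\subseteq(\OG)^{\langle u\rangle}$, since $u$ centralises $C_G(u)$. Moreover $\Br_{\langle u\rangle}$ restricted to $\CO C_G(u)$ is the canonical surjection $\CO C_G(u)\to kC_G(u)$, reduction modulo $J(\CO)$. Hence $\Br_{\langle u\rangle}(f)=\bar f$.

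Second, since $f$ is primitive in $\CO C_G(u)$, its image $\bar f$ is primitive in $kC_G(u)$ by lifting of idempotents. Since $\CO C_G(u)f$ is a projective cover of the simple module $S$ with Brauer character $\varphi$, we get $\bar f S\neq 0$. Also $\Br_{\langle u\rangle}(f)\neq 0$, so $f$ belongs to a local point of $\langle u\rangle$ on $\OG$.

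To conclude that $f$ lies in the specific point $\gamma$ used in the formula, I would use the standard bijection between local points of $\langle u\rangle$ on $\OG$ and points of $kC_G(u)$, i.e.\ conjugacy classes of primitive idempotents of $kC_G(u)$, induced by $\Br_{\langle u\rangle}$. Primitive idempotents of $(\OG)^{\langle u\rangle}$ with nonzero Brauer image correspond, up to conjugacy, to primitive idempotents of $kC_G(u)$. Since $\bar f$ and $\Br_{\langle u\rangle}(j)$ are both primitive and do not annihilate $S$, they are conjugate in $kC_G(u)$. Hence $f$ and $j$ are $((\OG)^{\langle u\rangle})^\times$-conjugate.

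Finally, the value is constant on the point. If $f=aja^{-1}$ with $a$ invertible in $(\OG)^{\langle u\rangle}$, then $a$ commutes with $u$. So $uf=a(uj)a^{-1}$, and since $\chi$ extends linearly to a trace function on $K\ten_\CO\OG$, we get $\chi(uf)=\chi(uj)=d^u_{\chi,\varphi}$.

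The main obstacle is the conjugacy step: checking that primitivity of $f$ in the subalgebra $\CO C_G(u)$ implies that $f$ is primitive in the larger algebra $(\OG)^{\langle u\rangle}$. I would handle this via the decomposition $(\OG)^{\langle u\rangle}=\CO C_G(u)\oplus\ker(\Br_{\langle u\rangle})$ on the relevant part. Concretely, suppose $f=f_1+f_2$ with $f_1,f_2$ orthogonal idempotents of $(\OG)^{\langle u\rangle}$. Primitivity of $\bar f$ forces one of them, say $f_2$, to satisfy $\Br_{\langle u\rangle}(f_2)=0$.

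If that route stalls, I would drop the claim that $f$ is primitive in $(\OG)^{\langle u\rangle}$. Decompose $f$ there into primitive idempotents: exactly one summand is local and lies in $\gamma$, and the others lie in the kernel of $\Br_{\langle u\rangle}$. The remaining task is to show that $\chi(ui)=0$ for such non-local $i$. This holds because such $i$ lie in $\sum_{Q<\langle u\rangle}(\OG)^{Q}_{\text{relative traces}}$, i.e.\ they are relatively projective with respect to proper subgroups of $\langle u\rangle$. The standard vanishing $\chi(u\Tr^{\langle u\rangle}_Q(a))=\sum_{t}\chi(u\,tat^{-1})=|\langle u\rangle:Q|\chi(ua)$ is not immediately zero, so I expect this to need Puig's argument from \cite[Theorem 5.15.3]{LiBookI}, namely that the contributions of non-local points cancel. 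That is the step I would cite rather than reprove.
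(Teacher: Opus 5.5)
Your fallback argument is correct and is essentially the paper's proof. The paper lifts the primitive idempotent $\Br_{\langle u\rangle}(f)=\bar f$ of $kC_G(u)$ to a primitive idempotent $j\in(\OG)^{\langle u\rangle}$ with $\Br_{\langle u\rangle}(j)=\bar f$, so $j$ lies in the local point of Puig's description. It then invokes \cite[Theorem 5.12.16]{LiBookI} to pass from $\Br_{\langle u\rangle}(f)=\Br_{\langle u\rangle}(j)$ to $\chi(uf)=\chi(uj)$; this is equivalent to the vanishing you defer.

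Your primary route, however, cannot be repaired, because $f$ need not be primitive in $(\OG)^{\langle u\rangle}$. Take $p=2$, $G=S_3$, $u$ a transposition and $c$ a $3$-cycle. Then $C_G(u)=\langle u\rangle$, so $f=1$ is primitive in the local ring $\CO\langle u\rangle$. But $e=\frac{1}{3}(1+c+c^2)$ is a $u$-stable idempotent different from $1$ with $\Br_{\langle u\rangle}(e)=1$, so $1=e+(1-e)$ in $(\OG)^{\langle u\rangle}$. Hence no conjugacy between $f$ and $j$ is available in general, and splitting $f$ into one local and some non-local primitive idempotents is unavoidable.

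Also, the non-local terms do not cancel against each other: each one vanishes on its own. This is a property of idempotents, not of the whole ideal $\sum_{R<Q}(\OG)^Q_R$, which is why your linear computation $\chi(u\Tr^Q_R(a))=|Q:R|\chi(ua)$ cannot settle it. Here is the argument. Write $Q=\langle u\rangle$, and write $\chi$ as a difference of characters of $\CO$-free modules $U$. Let $i\in(\OG)^Q$ be a primitive idempotent with $\Br_Q(i)=0$.
\begin{itemize}
\item Since $U=iU\oplus(1-i)U$ as $\OQ$-modules, $\chi(ui)$ is the value at $u$ of the character of the $\OQ$-module $iU$.
\item On $(\OG)^Q$ we have $\ker\Br_Q=\sum_{R<Q}(\OG)^Q_R+J(\CO)(\OG)^Q$, and the last ideal contains no nonzero idempotent. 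So Rosenberg's lemma gives $i\in(\OG)^Q_R$ for some $R<Q$, say $i=\Tr^Q_R(a)=\Tr^Q_R(iai)$.
\item Higman's criterion then shows that $iU$ is relatively $R$-projective, so every indecomposable summand of $iU$ has a vertex properly contained in $Q$.
\item The character of such a lattice vanishes at the generator $u$; the same fact is used in the proof of Lemma~\ref{lem8}.
\end{itemize}
So $\chi(ui)=0$, and with this step inserted your fallback is a complete proof.
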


\begin{proof}
Note that $\Br_{\langle u\rangle}(f)$ is the image of $f$ in $kC_G(u)$, hence
a primitive idempotent. By standard lifting theorems of idempotents, there is
a primitive idempotent $j$ in $\OG^{\langle u\rangle}$ such that
$\Br_{\langle u\rangle}(j)=$ $\Br_{\langle u\rangle}(f)$. 
It follows from  \cite[Theorem 5.12.16]{LiBookI} that we have $\chi(uf)=$
$\chi(uj)=$ $d^u_{\chi, \varphi}$.
\end{proof}

Before we proceed, as mentioned in the previous Section, 
we use Lemma \ref{lem05}  to give an alternative proof of Corollary
\ref{Cor05} which does not require Theorem \ref{thm1}.

\begin{proof}[Proof of Corollary \ref{Cor05}]
There is clearly an element $g\in G$ such that $c(\chi)_p=$ $c(\chi(g))_p$.
Write $g=us=su$ with $u\in G_p$ and $s\in C_G(u)_{p'}$. Since $s$ has order
prime to $p$, it generates, in $\OG$, 
a commutative $\CO$-algebra isomorphic to a direct product of copies of $\CO$.
Thus $s$ can be written as a linear combination of primitive idempotents
in $\CO C_G(u)$ with cyclotomic coefficients involving only $p'$-order roots of
unity. It follows that there is a primitive idempotent $f\in \CO C_G(u)$ such that
$c(\chi(g))_p=$ $c(\chi(uf))$. Corollary \ref{Cor05} follows now from Lemma
\ref{lem05}.
\end{proof}

Aside from Thereom \ref{thm1}, Puig's description of generalised decomposition 
numbers is also the key ingredient for the proof of Corollary \ref{Cor1}. 

\begin{proof}[{Proof of Corollary \ref{Cor1}}]
The generalised decomposition numbers of $\chi$ and $\chi'$ differ at most by
signs; indeed, this follows from combining Puig's description of generalised
decomposition (reviewed in \cite[Theorem 5.15.3]{LiBookI} and above), combined with
\cite[Theorem 7.4.3]{LiBookII}, and \cite[Theorem 9.11.9]{LiBookII}.
Thus Corollary \ref{Cor1} follows from Theorem \ref{thm1}.
\end{proof}

Note that the map sending $v\in$ $\langle u\rangle$ to $\chi(vj)$ is a generalised
character of $\langle u\rangle$, and that if $\chi\in$ $\Pr(B)$,  then this
generalised character belongs to $\Pr(\CO\langle u\rangle)$. These vanish
at all nontrivial elements of $\langle u\rangle$, and thus we have the following
 well-known  consequence that we state for future reference.

\begin{Lemma} \label{lem1}
With the notation above, let $\chi$, $\chi'\in$ $\Z\Irr(B)$ such that $\chi-\chi'\in$
$\Pr(B)$. Then, for any nontrivial $p$-element $u\in G$ and any $\varphi\in$
$\IBr(kC_G(\langle u\rangle)$ we have 
$d^u_{\chi, \varphi} = d^u_{\chi',\varphi}.$
\end{Lemma}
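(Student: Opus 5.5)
By linearity it suffices to show that for every nontrivial $p$-element $u$ and every $\varphi\in\IBr(C_G(u))$ we have $d^u_{\chi-\chi',\varphi}=0$. Indeed, by definition the generalised decomposition numbers are additive in the character: if $\chi=\sum_\varphi d^u_{\chi,\varphi}\varphi$ and $\chi'=\sum_\varphi d^u_{\chi',\varphi}\varphi$ on $uC_G(u)_{p'}$, then subtracting gives the corresponding expression for $\chi-\chi'$. Uniqueness of the coefficients, which holds because $\IBr(C_G(u))$ is linearly independent, then gives $d^u_{\chi-\chi',\varphi}=d^u_{\chi,\varphi}-d^u_{\chi',\varphi}$. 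So we may set $\theta=\chi-\chi'\in\Pr(B)$ and prove that $d^u_{\theta,\varphi}=0$.

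I would use Puig's description $d^u_{\theta,\varphi}=\theta(uj)$, where $j\in(\OG)^{\langle u\rangle}$ lies in the local point of $\langle u\rangle$ attached to $\varphi$. As noted just before the statement, the function $v\mapsto\theta(vj)$ on $\langle u\rangle$ is a generalised character. To see this, write $\theta$ as a $\Z$-combination of characters of modules $X$, $X'$. Since $j$ commutes with $\langle u\rangle$, the $K$-space $j(K\otimes_\CO X)$ is a $K\langle u\rangle$-module. The trace of $v$ acting on it is $\theta(vj)$ for the corresponding summand, because $j$ is an idempotent commuting with $v$.

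Now suppose $\theta$ is the character of a projective $B$-module $Y$. Then $Y$ restricted to $\langle u\rangle$ is projective, hence free, as an $\CO\langle u\rangle$-module. Its direct summand $jY$ is a summand of $Y$ as an $\CO\langle u\rangle$-module, since $j$ commutes with $\langle u\rangle$. So $jY$ is projective over $\CO\langle u\rangle$, and because $\langle u\rangle$ is a $p$-group, $jY$ is free. The character of a free $\CO\langle u\rangle$-module vanishes on nontrivial elements. Hence $\theta(uj)=0$ whenever $u\neq1$. Extending $\Z$-linearly to all of $\Pr(B)$ gives $d^u_{\theta,\varphi}=0$, which proves the lemma.

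The only point that needs care is the step $\theta(uj)=\mathrm{tr}(u\,|\,jY)$, together with the claim that $jY$ is an $\CO\langle u\rangle$-direct summand of $Y$. Both follow from $j\in(\OG)^{\langle u\rangle}$: left multiplication by $j$ is then an idempotent $\CO\langle u\rangle$-endomorphism of $Y$. An alternative route avoids Puig's description. Using Lemma \ref{lem05} with a primitive idempotent $f\in\CO C_G(u)$, we have $d^u_{\theta,\varphi}=\theta(uf)$, and the same freeness argument applies to $fY$ regarded as an $\CO\langle u\rangle$-module, since $f$ commutes with $u$. I expect no real obstacle here.
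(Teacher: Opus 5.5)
Your proof is correct and follows the paper's own argument. You use Puig's description $d^u_{\theta,\varphi}=\theta(uj)$ and note that, when $\theta$ is the character of a projective $B$-module $Y$, the function $v\mapsto\theta(vj)$ is the character of the projective (hence free) $\mathcal{O}\langle u\rangle$-module $jY$, which vanishes at $u\neq 1$. You just spell out details the paper leaves implicit, namely linearity in the character and the fact that $jY$ is an $\mathcal{O}\langle u\rangle$-direct summand of $Y$.
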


\begin{proof}
A generalised projective character of $\CO\langle u\rangle$ vanishes at all
nontrivial elements in $\langle u\rangle$. Thus, by  the 
preceding observations, if $u$ is a nontrivial $p$-element in $G$,
then $d^u_{\chi-\chi',\varphi}=0$.
\end{proof}

Again for future reference, we mention the following fact which could also be
proved directly (that is, without using Lemma \ref{lem1} and Theorem \ref{thm1}).

\begin{Lemma} \label{lem2}
With the notation above, let $U$ be a finitely generated $\CO$-free
$B$-module, and let $Y$ be a finitely generated projective $B$-module.
Denote by $\chi$ and $\psi$ the characters of $U$ and of $U\oplus Y$,
respectively. 
Then $c(\chi)_p=$ $c(\psi)_p$.
\end{Lemma}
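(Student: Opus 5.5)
The plan is to combine Lemma \ref{lem1} with Theorem \ref{thm1}. By additivity of characters over direct sums, $\psi = \chi + \eta$, where $\eta$ is the character of $Y$. Since $Y$ is a finitely generated projective $B$-module, $\eta \in \Pr(B)$, and so $\psi - \chi \in \Pr(B)$. This is exactly the hypothesis of Lemma \ref{lem1}.

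Lemma \ref{lem1} therefore gives
$$d^u_{\chi,\varphi} = d^u_{\psi,\varphi}$$
for every nontrivial $p$-element $u \in G$ and every $\varphi \in \IBr(C_G(u))$.

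The remaining case is $u = 1$, which Lemma \ref{lem1} does not cover. Here I would use that the ordinary decomposition numbers $d^1_{\chi,\varphi}$ and $d^1_{\psi,\varphi}$ are rational integers, as in the Remark after the proof of Theorem \ref{thm1}. Concretely, $\chi(s)$ for $s \in G_{p'}$ is an integral combination of Brauer characters, because $U$ is a lattice. Rational integers lie in $\Q = \Q(\zeta_1)$, so they contribute only the factor $1$ to the conductor $c(\,\cdot\,)$ of the set of generalised decomposition numbers.

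Consequently the sets
$$\{d^u_{\chi,\varphi}\ |\ u\in G_p,\ \varphi\in\IBr(C_G(u))\} \quad\text{and}\quad \{d^u_{\psi,\varphi}\ |\ u\in G_p,\ \varphi\in\IBr(C_G(u))\}$$
may differ only in rational entries, and so they have the same conductor. Applying Theorem \ref{thm1} to both $\chi$ and $\psi$ then yields $c(\chi)_p = c(\psi)_p$.

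The only delicate point is the $u=1$ term, and it is handled by the integrality of ordinary decomposition numbers. For a cross-check, one can argue directly: $\eta$ vanishes off $G_{p'}$, and on $G_{p'}$ its values lie in $\Q(\zeta_{n'})$ with $n' = |G|_{p'}$. Hence adding $\eta$ changes values only at $p'$-elements, and only by elements of a field with trivial $p$-part of the conductor. It remains to see that the values of $\chi$ at $p'$-elements also lie in $\Q(\zeta_{n'})$. This holds because they are Brauer character values, which lie in $\Q(\zeta_{n'})$. Therefore $c(\chi)_p = c(\psi)_p$.
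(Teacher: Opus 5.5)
Your proposal is correct and follows the paper's proof. Lemma \ref{lem1} identifies the non-ordinary generalised decomposition numbers of $\chi$ and $\psi$, Theorem \ref{thm1} then gives the equality, and your remark that the $u=1$ numbers $d^1_{\chi,\varphi}$, $d^1_{\psi,\varphi}$ are rational integers makes explicit a step the paper leaves implicit. Your cross-check is also valid: the two characters agree on $p$-singular elements and take values in $\Q(\zeta_{n'})$ on $p'$-elements, and this is exactly the direct proof, avoiding Lemma \ref{lem1} and Theorem \ref{thm1}, that the paper says could be given.
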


\begin{proof}
By Lemma \ref{lem1} the characters of $U$ and of $U\oplus Y$ have the same
non-ordinary generalised decomposition numbers. Thus Lemma \ref{lem2}
follows from Theorem \ref{thm1}.
\end{proof}

\section{Quoted results  on  endopermutation modules}

Background material on
$p$-permutation modules can be found for instance in \cite[Section 5.11]{LiBookI}.
A finitely generated $\OG$-module $U$ is called a {\em $p$-permutation module}
if its restriction to some  Sylow $p$-subgroup of $G$ is a permutation
module, or equivalently, if its restriction to any $p$-subgroup is a permutation
module. Any direct summand of a $p$-permutation module is again a 
$p$-permutation module. In particular, a block algebra $B$ of $\OG$ is
a $p$-permutation $\CO(G\times G)$-module. For any $p$-subgroup $Q$ of
$G$ and any idempotent $j\in$ $(\OG)^Q$ the $\CO(Q\times G)$-module
$j\OG$ is a $p$-permutation module. This module is indecomposable if and
only if $j$ is primitive in $(\OG)^Q$ (that is, $j$ belongs to a point of $Q$ on $\OG$).
In that case $\Delta Q$ is a vertex of $i\OG$ if and only if $\Br_Q(j)\neq 0$
(that is, $j$ belongs to a local point of $Q$ on $\OG$). The following 
well-known easy observation will be used in the proof of Lemma \ref{lem75}.

\begin{Lemma} \label{lem25}
Let $G$ be a finite group, $Q$ a $p$-subgroup of $G$, and $j$ an idempotent
in $(\OG)^Q$ such that $\Br_Q(j)\neq 0$. Then $j\OG j$ is a permutation
$\CO(Q\times Q)$-module  which is projective as a left and as a right
$\OQ$-module, and the following are equivalent.
\begin{itemize}
\item[{\rm (i)}]  $\Br_Q(j) \neq 0$.
\item[{\rm (ii)}] As an $\CO(Q\times Q)$-module, $j\OG j$ has an indecomposable
direct summand with vertex $\Delta Q$.
\item[{\rm (iii)}] As an $\CO(Q\times Q)$-module, $j\OG j$ has an indecomposable
direct summand isomorphic to $\OQ$.
\end{itemize}
\end{Lemma}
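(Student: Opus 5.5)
The first claim follows from the structure of $j\OG$ as a $p$-permutation bimodule. Since $\OG$ is a permutation $\CO(G\times G)$-module with basis $G$, the restriction of $j\OG j$ to $Q\times Q$ is a direct summand of the restriction of $\OG$ to $Q\times Q$. That restriction is a permutation module with $Q\times Q$-stable basis $G$. Hence $j\OG j$ is a $p$-permutation $\CO(Q\times Q)$-module, which for a $p$-group means a permutation module. As a left $\OQ$-module, $j\OG j$ is a direct summand of $\OG$, which is free over $\OQ$, so it is projective; the right-hand side is symmetric. Consequently every indecomposable summand has the form $\Ind^{Q\times Q}_R(\CO)$ with $R\le Q\times Q$ and $R\cap (Q\times 1)=1=R\cap(1\times Q)$, so $R$ is a twisted diagonal subgroup $\{(\alpha(y),y)\mid y\in S\}$ for an injective homomorphism $\alpha:S\to Q$ with $S\le Q$.

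For (iii)$\Rightarrow$(ii), note that $\OQ\cong\Ind^{Q\times Q}_{\Delta Q}(\CO)$ has vertex $\Delta Q$. For (ii)$\Rightarrow$(iii), a summand with vertex $\Delta Q$ is a trivial source module, hence isomorphic to $\Ind^{Q\times Q}_{\Delta Q}(\CO)\cong \OQ$, by the permutation module description above.

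For (i)$\Leftrightarrow$(ii), apply the Brauer construction with respect to $\Delta Q$. For a permutation module $\Ind_R^{Q\times Q}(\CO)$, the Brauer quotient $\Br_{\Delta Q}$ is nonzero if and only if $\Delta Q$ is contained in a conjugate of $R$. Among twisted diagonal $R$ of order at most $|Q|$, this means $R$ is conjugate to $\Delta Q$. Thus (ii) holds if and only if $(j\OG j)(\Delta Q)\neq 0$. Next identify this Brauer quotient with the image of $j\OG j$ in $(\OG)(\Delta Q)\cong kC_G(Q)$: it is $\Br_Q(j)\,kC_G(Q)\,\Br_Q(j)$. This uses the fact that $\Br_Q$ on $(\OG)^Q$ is multiplicative and that $j(\OG)^Q j = (j\OG j)^{\Delta Q}$, so the Brauer construction commutes with cutting by $j$. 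This algebra is nonzero if and only if $\Br_Q(j)\neq0$. This identification is the only step needing care, and I expect it to be the main obstacle: one must check that the $\Delta Q$-Brauer quotient of the summand $j\OG j$ is the corresponding corner of the Brauer quotient of $\OG$. I would argue this via the standard fact that the Brauer construction of a direct summand $X$ of a module $Y$ cut out by an idempotent $e\in\End(Y)^{\Delta Q}$ is $\Br(e)Y(\Delta Q)$.

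Finally, since $\Br_Q(j)\ne 0$ is assumed, all three statements hold. The equivalence is therefore a formal restatement, which is presumably how it is used in Lemma~\ref{lem75}.
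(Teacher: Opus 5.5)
Your proof is correct. The paper gives no argument of its own here: it simply refers to Lemma 5.8.8 in the first volume of Linckelmann's book. Your proposal is therefore a self-contained replacement for that citation rather than a competing strategy, and its ingredients all check out:
\begin{itemize}
\item[(a)] $j\OG j$ is an $\CO(Q\times Q)$-summand of $\OG$ because $j$ commutes with $Q$.
\item[(b)] Projectivity on each side forces every indecomposable summand to be $\Ind^{Q\times Q}_R(\CO)$ with $R$ twisted diagonal. Hence $|R|\le |Q|$, and this bound is exactly what turns ``$\Delta Q\le {}^gR$'' into ``$R$ is conjugate to $\Delta Q$''.
\item[(c)] Relative traces and the Brauer morphism commute with multiplication by the $Q$-fixed idempotent $j$. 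This gives $(j\OG j)(\Delta Q)\cong \Br_Q(j)\,kC_G(Q)\,\Br_Q(j)$, which is nonzero precisely when $\Br_Q(j)\neq 0$.
\end{itemize}

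What your route buys over the citation is transparency. It shows that the three conditions are equivalent for an arbitrary idempotent $j\in(\OG)^Q$, primitive or not, without assuming (i). It also isolates the order bound on twisted diagonal subgroups as the key point.

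One small correction to your closing remark. Since $\Br_Q(j)\neq 0$ is already a hypothesis, the statement amounts to asserting that (ii) and (iii) hold. The implication (i)$\Rightarrow$(iii) that you prove is the genuine content used in Lemma \ref{lem75}, so it is not a merely formal restatement.
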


\begin{proof}
This  is, for instance, an immediate consequence of \cite[Lemma 5.8.8]{LiBookI}.
\end{proof}

\medskip
For basics on endopermutation modules, a concepet introduced by Dade
 \cite{Dadeendo}, we refer to the exposition in 
\cite[Section 7.3]{LiBookII} as well as to the broader introduction to this
topic, with further references,  in \cite{Thevtour}.
Given a finite $p$-group $P$ and an 
indecomposable endopermutation $\OP$-module $V$ with vertex $P$,
for any subgroup $Q$ of $P$ the $\OQ$-module $\Res^P_Q(V)$ is an
endopermutation $\OQ$-module with a direct summand $V_Q$ having
$Q$ as a vertex. Up to isomorphism, $V_Q$ arises with a multiplicity 
congruent to either $1$ or $-1$ modulo $p$ in any decomposition
of $\Res^P_Q(V)$ as a direct sum of indecomposable $\OQ$-modules.
All indecomposable direct summands of $\Res^P_Q(V)$
not isomorphic to $V_Q$ have vertices strictly smaller than $Q$.
For $\CF$ a fusion system on $P$, we say that $V$ is $\CF$-{\em stable}
if for any isomorphism $\varphi : Q\to R$ in $\CF$ we have an
isomorphism of $\OQ$-modules $V_Q\cong$ $\Res_\varphi(V_R)$
(see \cite[Section 9.9]{LiBookII} for more details).  
We will say that $V$ is defined over the unramified subring  of $\CO$ 
 if $V\cong$ $\CO\ten_{\CO'} V'$ for some
$\CO'$-module $V'$, where $\CO'$ is the unramified complete discrete valuation
subring with residue field $k$,  and then $V'$ is again an indecomposable endopermutation 
$\CO' P$-module with vertex $P$. For any $V$ as before, there is an endopermutation
$\OP$-module $W$ such that $k\tenO W\cong$ $k\tenO V$ and such that $W$ is
defined over $\CO'$. That is, up to modifying $V$ by a rank one $\OP$-module we may
assume that $V$ is defined over $\CO'$. If $V$ is defined over $\CO'$, then the
character $\chi_V$ of $V$ takes values in $\Z$. 
If moreover $P$ is cyclic and $u$ a generator of $P$, then $\chi_V(u)\in$ $\{1,-1\}$.
The following is well-known.

\begin{Lemma} \label{lem3}
Let $P$ be a $p$-subgroup of $G$, and let $V$ be an indecomposable
endopermutation $\OP$-module with vertex $P$ defined over the unramified
subring of $\CO$.
\begin{itemize}
\item[{\rm (i)}] 
Let $U$ be an finitely generated $\CO$-free indecomposable $\OP$-module 
with vertex $P$.
Then $V\tenO U$ has a unique indecomposable direct summand with vertex $P$,
and all other indecomposable summands have vertices strictly smaller than $P$, in
any decomposition of $V\tenO U$ as a direct sum of indecomposable
$\OP$-modules with respect to the diagonal action of $P$.
\item[{\rm (ii)}] 
Let $Y$ be a finitely generated $\CO$-free indecomposable 
$\OP$-$\OG$-module which has vertex $\Delta P$ as an $\CO(P\times G)$-module.
Consider $V\tenO Y$ as an $\CO(P\times G)$-module  with $P$ acting diagonally on 
the left.
Then $V\tenO Y$ has a unique indecomposable direct summand with vertex $\Delta P$,
and all other indecomposable summands have vertices of order strictly smaller 
than $|P|$, in
any decomposition of $V\tenO Y$ as a direct sum of indecomposable
$\CO(P\times G)$-modules, 
\end{itemize}
\end{Lemma}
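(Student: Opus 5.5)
The plan is to prove (i) from standard facts on endopermutation modules and Green's indecomposability theorem, and then to deduce (ii) from (i) by restricting to the diagonal subgroup $\Delta P$.

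For (i), note that since $U$ is indecomposable with vertex $P$, the module $U$ is relatively $P$-projective only trivially, so the substantive statement concerns the tensor product. The key tool is that $\End_\CO(V)\cong V\tenO V^*$ is a permutation $\OP$-module containing the trivial module $\CO$ as a direct summand. This holds because $V$ has vertex $P$, so its rank is prime to $p$; the trace map then splits the unit $\CO\to\End_\CO(V)$.

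Consequently $U$ is a direct summand of $V^*\tenO V\tenO U$. Decompose $V\tenO U=\bigoplus_i X_i$ into indecomposable summands. By Krull--Schmidt, $U$ is then a summand of $V^*\tenO X_i$ for some $i$. If $X_i$ had a vertex $R<P$, then $V^*\tenO X_i$ would be relatively $R$-projective, and so would $U$, a contradiction. Hence at least one summand has vertex $P$.

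For uniqueness, I use a counting argument via the Brauer construction applied to endomorphism algebras:
\[
\End_\CO(V\tenO U)\cong \End_\CO(V)\tenO\End_\CO(U).
\]
The multiplicity of summands with vertex $P$ is detected by the Brauer quotient of $\End_\CO(V\tenO U)^P$ modulo traces from proper subgroups. Since $\End_\CO(V)$ is a permutation module with $\Br_P(\End_\CO(V))\cong k$ (this is $V$ indecomposable with vertex $P$, defined over $\CO'$, so Dade's theory gives $\Br_P(\End_k(k\tenO V))\cong k$ as algebras), the tensor product reduces this quotient to that of $\End_\CO(U)$ alone. That quotient is a local algebra, since $U$ is indecomposable with vertex $P$. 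So exactly one summand has vertex $P$.

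This step is the main obstacle. The map
\[
\End_\CO(V)^P\ten\End_\CO(U)^P\to(\End_\CO(V)\ten\End_\CO(U))^P
\]
is not surjective in general. I would therefore argue through the quotient $\End(-)^P/\sum_{R<P}\Tr_R^P$ and use that $\End_k(k\tenO V)\cong k\oplus(\text{projective relative to proper subgroups})$ as a $kP$-module when $p\nmid\mathrm{rank}(V)$. Tensoring this decomposition with $\End_k(k\ten U)$, the second part contributes only traces from proper subgroups, so the quotients of $\End(V\ten U)$ and $\End(U)$ coincide. The quotient for $\End(U)$ is local, giving uniqueness. The hypothesis that $V$ is defined over $\CO'$ is not needed in the argument itself beyond fixing $V$ up to this normalisation.

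For (ii), I view $Y$ as a module over $P\times G$. Since $Y$ has vertex $\Delta P$, we have $Y\mid\Ind^{P\times G}_{\Delta P}(U)$ for a source $U$, an indecomposable $\CO\Delta P$-module with vertex $\Delta P$. The functor $V\tenO-$, with $P$ acting diagonally on the left, commutes with this induction:
\[
V\tenO\Ind^{P\times G}_{\Delta P}(U)\cong\Ind^{P\times G}_{\Delta P}(V\tenO U).
\]
Here $V$ is inflated along the projection $P\times G\to P$, and $\Delta P\cong P$.

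Applying (i) to $V\tenO U$ gives a unique summand $W$ with vertex $\Delta P$, all other summands having smaller vertices. By Green correspondence for the pair $(P\times G,\Delta P)$, or more simply by comparing the Brauer quotients at $\Delta P$ as above, $V\tenO Y$ has at most one summand with vertex $\Delta P$. At least one exists, by the same $V^*\tenO V$ splitting argument as in (i): $Y$ is a summand of $V^*\tenO V\tenO Y$. All other summands are relatively projective with respect to subgroups of $\Delta P$ of smaller order, or more generally to subgroups conjugate into $\Delta R$ with $R<P$ (using Mackey), and so have order strictly less than $|P|$.
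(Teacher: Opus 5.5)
Your argument for (i) is correct as long as you run it over $\CO$. Because $V$ is endopermutation with vertex $P$, $\End_\CO(V)$ is a permutation $\OP$-module with $\Br_P(\End_\CO(V))\cong k$. So $\End_\CO(V)=\CO\Id_V\oplus T$, where $T$ is a sum of modules $\CO[P/R]$ with $R<P$. Then $\End_\CO(V\tenO U)^P/\sum_{R<P}\Tr^P_R$ is isomorphic to $\End_{\OP}(U)/\sum_{R<P}\Tr^P_R$, which is local.

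Two small slips in (i):
\begin{itemize}
\item The shape of $T$ comes from this permutation structure, not from $p\nmid\mathrm{rank}(V)$ alone.
\item You should not pass to $\End_k(k\tenO U)$. The reduction $k\tenO U$ can have several summands with vertex $P$, so that quotient need not be local.
\end{itemize}

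The genuine gap is the uniqueness in (ii), which you assert in a single sentence. Your induction argument only gives $V\tenO Y\mid\Ind^{P\times G}_{\Delta P}(W)\oplus Z$, with $W$ indecomposable of vertex $\Delta P$ and $Z$ of smaller vertices. This bounds nothing, because $\Ind^{P\times G}_{\Delta P}(W)$ usually has many summands with vertex $\Delta P$. For instance, with $V=U=\CO$ it is $\OG$, which has a summand $j\OG$ with vertex $\Delta P$ for every primitive local idempotent $j$ in a decomposition of $1$ in $(\OG)^P$. So your method cannot distinguish $Y$ from $\OG$.

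Neither of your two suggested repairs fills this:
\begin{itemize}
\item Green correspondence is for $N_{P\times G}(\Delta P)=\Delta P\times(1\times C_G(P))$, not for $\Delta P$. It only reduces the problem to that normaliser, after checking compatibility with $V\tenO-$, where $\Delta P$ is still not the whole group.
\item The Brauer-quotient count in (i) worked because $P$ was the whole group. For $P\times G$, vertex-$\Delta P$ summands are counted by projective points of $N_{P\times G}(\Delta P)/\Delta P$ on $\End_\CO(V\tenO Y)(\Delta P)$, and you do not analyse these.
\end{itemize}

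A clean repair reuses your (i)-argument with $G$ playing the role of coefficients.
\begin{itemize}
\item Put $E=\End_{\OG}(Y)$, a $P$-algebra via the left $P$-action. Then $\End_{\CO(P\times G)}(V\tenO Y)=(\End_\CO(V)\tenO E)^P$.
\item By Higman's criterion, an indecomposable summand is relatively $(R\times G)$-projective for some $R<P$ if and only if its primitive idempotent lies in $\sum_{R<P}\Tr^P_R$.
\item Since $\Delta P$ is not conjugate into any $R\times G$ with $R<P$, the quotient $E^P/\sum_{R<P}\Tr^P_R(E^R)$ is nonzero and local.
\item Your splitting $\End_\CO(V)=\CO\Id_V\oplus T$ identifies the corresponding quotient for $\End_\CO(V)\tenO E$ with this one. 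Hence exactly one summand $X$ of $V\tenO Y$ is not relatively $(R\times G)$-projective for any $R<P$.
\item Every vertex $Q$ of a summand lies in a conjugate of $\Delta P$, so the first projection $\pi_1:Q\to P$ is injective.
\item For $X$ this forces $\pi_1(Q)=P$, so $Q$ is conjugate to $\Delta P$. For every other summand, $\pi_1(Q)$ is conjugate into some $R<P$, so $|Q|<|P|$.
\end{itemize}
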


\begin{proof} 
See e.g. \cite[Proposition 7.3.10, 7.3.18]{LiBookII} for proofs of these two statements.
\end{proof}

We spell out two  observations that may look slightly artificial but 
are taylor-made for the proof of Theorem
\ref{thm2} below.

\begin{Lemma} \label{lem5}
Let $Q$ be a $p$-subgroup of $G$ and $R$ a proper subgroup of $Q$.

\begin{itemize}
\item[{\rm (i)}] Let $N$ be a finitely generated $\OR$-$\OQ$-bimodule.
Then $\Delta Q$ is not contained in any vertex of any indecomposable  direct
summand of $\OQ\tenOR N$ regarded as an $\CO(Q\times Q)$-module.

\item[{\rm (ii)}] Let $X$ be a finitely generated $\OQ$-$\OG$-bimodule, and let
$V$ be a finitely generated $\OQ$-module with vertex $R$. Consider $V\tenO X$
as an $\OQ$-$\OG$-bimodule with $Q$ acting diagonally on the left and $G$
acting on the right via the action induced by that on $X$. Then
$\Delta Q$ is not contained in any vertex of any indecomposable direct
summand of $V\tenO X$ regarded as an $\CO(Q\times G)$-module.
\end{itemize}
\end{Lemma}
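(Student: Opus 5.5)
The plan is to reduce both parts to standard vertex-theoretic facts: the vertex bound for a direct summand of an induced module, and the Mackey formula/Green correspondence for tensor products.

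\emph{Part (i).} The $\CO(Q\times Q)$-module $\OQ\tenOR N$ is a direct summand of, in fact isomorphic to, $\Ind^{Q\times Q}_{R\times Q}(N)$. Here $N$ is regarded as an $\CO(R\times Q)$-module in the usual way, with $(r,q)$ acting by $n\mapsto rnq^{-1}$. This identification comes from $\OQ\tenOR N\cong \CO(Q\times Q)\ten_{\CO(R\times Q)} N$. So every indecomposable summand has a vertex contained in some $Q\times Q$-conjugate of $R\times Q$.

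Suppose $\Delta Q$ were contained in such a conjugate, say in $(x,y)(R\times Q)(x,y)^{-1}=({}^xR)\times Q$. Projecting onto the first factor gives $Q\leq {}^xR$. This is impossible, since $R$ is a proper subgroup of $Q$ and $|{}^xR|<|Q|$. Since vertices are only defined up to conjugacy, the conclusion holds for every vertex of every summand.

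\emph{Part (ii).} I would write $V$ as a summand of $\Ind^Q_R(W)$ for some $\OR$-module $W$, which is possible because $R$ is a vertex of $V$. Then $V\tenO X$, with the diagonal left $Q$-action, is a summand of $\Ind^Q_R(W)\tenO X$.

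The key step is the standard isomorphism $\Ind^Q_R(W)\tenO X\cong \Ind^{Q\times G}_{R\times G}(W\tenO \Res_{R\times G}X)$ of $\CO(Q\times G)$-modules. This is the Frobenius-type formula $\Ind(W)\ten M\cong\Ind(W\ten\Res M)$, applied to the group $Q\times G$ with $V$ inflated via the projection to $Q$. More precisely, $\Ind^Q_R(W)$ inflated to $Q\times G$ is $\Ind^{Q\times G}_{R\times G}$ of $W$ inflated to $R\times G$, and one then tensors with $X$.

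It follows that every indecomposable summand of $V\tenO X$ has a vertex contained in a conjugate $({}^xR)\times G$. Projecting to the first coordinate again shows that $\Delta Q$ cannot be contained in such a subgroup, because $|{}^xR|<|Q|$.

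The only point needing care is the second step of part (ii): checking the tensor–induction isomorphism with the diagonal action and the correct inflation. Once this is verified, by writing down the explicit map $(q\ten w)\ten x\mapsto q\ten(w\ten q^{-1}x)$, everything else is formal.
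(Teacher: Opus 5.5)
Your proof is correct and follows the paper's argument. In (i) you identify $\OQ\tenOR N$ with a module induced from $R\times Q$; in (ii) you write $V$ as a summand of $\Ind^Q_R(W)$ and use $\Ind^Q_R(W)\tenO X\cong \Ind^{Q\times G}_{R\times G}(W\tenO X)=\OQ\tenOR(W\tenO X)$. The only difference is that you run the projection-to-the-first-factor argument directly in $Q\times G$ for (ii), whereas the paper deduces (ii) from (i); in doing so it tacitly applies (i) to the $\OR$-$\OG$-bimodule $W\tenO X$, although (i) is stated only for $\OR$-$\OQ$-bimodules, and your direct argument avoids that mismatch.
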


\begin{proof}
We have an obvious identification $\OQ\tenOR N=$ $\Ind^{Q\times G}_{R\times G}(N)$.
This implies that no indecomposable direct summand of this module has a vertex containing
$\Delta Q$. Thus (i) follows.   
For (ii), by the assumptions,
$V$ is isomorphic to a direct summand of $\Ind^Q_R(T)$ for some finitely generated
$\OR$-module $T$. Thus $V\tenO X$ is isomorphic to a direct summand of
$\Ind^Q_R(T)\tenO X\cong$ $\Ind^{Q\times G}_{R\times G}(T\tenO X)=$
$\OQ\tenOR (T\tenO X)$. Thus (ii) follows from (i).
\end{proof}

\begin{Remark} 
We have stated Lemma \ref{lem5} in the form needed later, but one easily 
sees that this Lemma holds more generally with $\Delta Q$ replaced
by a twisted diagonal subgroup of the form $\Delta_\varphi(Q)=$
$\{(u, \varphi(u))\ |\ u\in Q\}$, where in (i) $\varphi$ is any automorphism of
$Q$ and in (ii) $\varphi : Q\to G$ is any  injective group homomorphism.
\end{Remark}

\section{Proof of Theorem \ref{thm2}} \label{thm2-proof}

We use the notation and hypotheses from Theorem \ref{thm2}; that is,
$G$, $G'$ are finite groups, $B$, $B'$ are blocks of $\OG$, $\CO G'$,
respectively, and we assume that $K$ contains a primitive root of unity of 
order divisible by $|G|$ and $|G'|$.
Let $M$ be an indecomposable  $B'$-$B$-bimodule inducing a stable 
equivalence of Morita type having an endopermutation module as a source when
regarded as an $\CO(G'\times G)$-module. Denote by $\Phi_M : \Z\Irr(B)\to$
$\Z\Irr(B')$ the group homomorphism induced by the functor $M\ten_{B}-$.
Since $M$ induces a stable equivalence of Morita type, by definition,
$M$ is finitely generated projective as a left $B'$-module and as a right
$B$-module, and we have a $B'$-$B'$-bimodule isomorphism
$$M\tenB M^*\cong B'\oplus X'$$ 
for some projective $B'$-$B'$-bimodule $X'$, and a $B$-$B$-bimodule isomorphism
$$M^*\ten_{B'} M \cong B\oplus X$$
for some projective $B$-$B$-bimodule $X$. We will later make use of the fact that 
$B\oplus X$  is a $p$-permutation $\CO(G\times G)$-module.
Since $M$ has an endopermutation module
as a source, it follows from results of Puig \cite[7.6]{Puigbook} (described in 
\cite[Theorem 9.11.2]{LiBookII}) that we may assume that $B$, $B'$
have a common defect group $P$, and that the diagonal subgroup
$\Delta P=\{(u,u)\ | \ u\in P\}$ of $G'\times G$ is a vertex of $M$.
Moreover, there are source idempotents $i\in B^P$, $i'\in (B')^P$
which determine the same fusion system $\CF$ on $P$, 
such that $M$ is isomorphic to a direct summand of
$$B'i' \tenOP \Ind^{P\times P}_{\Delta P}(V) \tenOP iB$$
and such that $V$, regarded as $\OP$-module via the
canonical isomorphism $P\cong$ $\Delta P$, is  an indecomposable
$\CF$-stable endopermutation module with full vertex $P$.
Set $A=iBi$ and $A'=i'B'i'$; these are source algebras of $B$, $B'$ respectively.
The $A'$-$A$-bimodule  $i'Mi$ is isomorphic to a direct summand of
$$A' \tenOP \Ind^{P\times P}_{\Delta P}(V) \tenOP A$$
and induces a stable equivalence of Morita type between $A$ and $A'$.

\begin{Lemma} \label{lem6}
Let $Q$ be a subgroup of $P$. Let $V_Q$ be an indecomposable direct
summand of $\Res^P_Q(V)$ with vertex $Q$. Let $j$ be a primitive local
idempotent in $A^Q$. The $\OQ$-$B$-bimodule $V\tenO jB$, when
regarded as an $\CO(Q\times G)$-module, has
up to isomorphism a unique direct summand with vertex $\Delta Q$.  
This summand is isomorphic to a direct summand of $V_Q\tenO j\OG$, and
$V_Q$ is a source of this summand.  Any other indecomposable direct 
summand of $V\tenO j\OG$ has vertices  of order strictly smaller that $|Q|$.
\end{Lemma}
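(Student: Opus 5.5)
Since $j$ is a primitive idempotent in $A^Q=(iBi)^Q\subseteq B^Q$ with $\Br_Q(j)\neq 0$, the $\CO(Q\times G)$-module $j\OG=jB$ is indecomposable with vertex $\Delta Q$, and it is a $p$-permutation module. So $j\OG$ is a direct summand of $\Ind^{Q\times G}_{\Delta Q}(\CO)=\OQ\tenOQ \OG$, which we may identify with $\OG$ viewed as an $\OQ$-$\OG$-bimodule. The plan is to reduce the claim to Lemma \ref{lem3} (ii), applied with $Q$ in place of $P$ and $V_Q$ in place of $V$, after removing the summands of $\Res^P_Q(V)$ that do not have vertex $Q$.

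First, decompose $\Res^P_Q(V)=V_Q^{\,m}\oplus W$, where every indecomposable summand of $W$ has a vertex strictly smaller than $Q$. Then
$$V\tenO jB\cong (V_Q\tenO j\OG)^m\oplus (W\tenO j\OG).$$
Lemma \ref{lem5} (ii), applied to each indecomposable summand of $W$ with $X=j\OG$, shows that no indecomposable summand of $W\tenO j\OG$ has a vertex containing $\Delta Q$. Since vertices of summands of $V\tenO j\OG$ are contained in $Q\times G$, their order is at most $|Q|$, and $\Delta Q$ is one such vertex. We need to exclude summands of order $|Q|$ that are not conjugate to $\Delta Q$. For this we use that $W\tenO j\OG$ is a summand of $\OQ\tenOR(\cdots)$, which is relatively $R\times G$-projective. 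Any vertex is then contained in $R'\times G$ for some proper subgroup $R'<Q$, and it must also be relatively projective with respect to the vertices of $j\OG$ twisted by the $V$-factor. Simpler still: every summand of $V\tenO j\OG$ restricted to $\CO(1\times G)$ is projective, so its vertex meets $1\times G$ trivially. Hence a vertex $D\le Q\times G$ projects injectively to the first factor, so $|D|\le|\mathrm{pr}_1(D)|$. If $D\le R'\times G$, this gives $|D|\le|R'|<|Q|$. This settles the "other summands" part for $W$.

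Second, consider $V_Q\tenO j\OG$ with $j\OG$ of vertex $\Delta Q$, regarded as an $\OQ$-$\OG$-module. Here $V_Q$ is an indecomposable endopermutation $\OQ$-module with vertex $Q$, and it is defined over the unramified subring because $V$ is (by uniqueness of the vertex-$Q$ summand, up to conjugation of coefficients). Lemma \ref{lem3} (ii), applied with $Q$ for $P$, then gives a unique indecomposable summand $Z$ with vertex $\Delta Q$, while all other summands have vertices of order $<|Q|$.

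It remains to show that $V_Q$ is a source of $Z$. Since $j\OG$ is a summand of $\Ind^{Q\times G}_{\Delta Q}(\CO)$, the module $V_Q\tenO j\OG$ is a summand of
$$V_Q\tenO\Ind^{Q\times G}_{\Delta Q}(\CO)\cong \Ind^{Q\times G}_{\Delta Q}(V_Q),$$
with $V_Q$ viewed as a $\Delta Q$-module. So $Z$ is a summand of $\Ind_{\Delta Q}^{Q\times G}(V_Q)$, and since $V_Q$ is indecomposable and $Z$ has vertex $\Delta Q$, Green's indecomposability/source theory implies that $V_Q$ is a source of $Z$.

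Combining the two steps, $V\tenO jB$ has, up to isomorphism, exactly one indecomposable summand with vertex $\Delta Q$, namely $Z$ (possibly with multiplicity $m$, but unique up to isomorphism), and all other summands have vertices of smaller order. The main obstacle I expect is the vertex bookkeeping in the first step: showing that summands coming from $W$ have vertices of order strictly less than $|Q|$, not merely vertices that fail to contain $\Delta Q$. The projection argument above, via right $\OG$-projectivity, is what I would rely on there.
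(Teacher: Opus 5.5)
\paragraph{Overall.} Your route is the one the paper's one-line proof (``follows from Lemma~\ref{lem3}'' plus the remark that $jB$ has trivial source) leaves implicit: decompose $\Res^P_Q(V)\cong V_Q^{m}\oplus W$, apply Lemma~\ref{lem3}~(ii) to $V_Q\tenO j\OG$, and apply Lemma~\ref{lem5}~(ii) to $W\tenO j\OG$. Your source argument is also correct: $V_Q\tenO j\OG$ is a summand of $\Ind^{Q\times G}_{\Delta Q}(V_Q)$, and Mackey's formula then gives the source.

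\paragraph{The gap.} It lies in the step you flag yourself. Lemma~\ref{lem5}~(ii) only says that no vertex of a summand of $W\tenO j\OG$ contains $\Delta Q$. To get order $<|Q|$, you argue that $D\cap(1\times G)=1$ because the module is projective on restriction to $1\times G$. That implication is false in general. Take $p=2$, the group $\langle b\rangle\times\langle a\rangle\cong C_2\times C_2$ with $\langle a\rangle$ in the role of $G$, and the lattice $L=\CO\langle a\rangle e_1\oplus\CO\langle a\rangle e_2$ on which $b$ acts $\CO\langle a\rangle$-linearly by $be_1=e_1$ and $be_2=(1-a)e_1-e_2$.
\begin{itemize}
\item $L$ is free over $\CO\langle a\rangle$.
\item $k\tenO L$ is indecomposable of Loewy length $2$, hence not projective.
\item $k\tenO L$ is free over $\langle a\rangle$ and over $\langle ab\rangle$, but not relatively $\langle b\rangle$-projective.
\end{itemize}
So $L$ has vertex $\langle a,b\rangle$, which meets $\langle a\rangle$ nontrivially. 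The preceding sentence in your proposal has the same problem: lying in $Q\times G$ does not by itself bound the order of a vertex by $|Q|$.

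\paragraph{The fix.} The missing ingredient is relative $\Delta Q$-projectivity, which you already use in your second step. The module $V\tenO j\OG$ is a direct summand of $V\tenO\OG\cong\Ind^{Q\times G}_{\Delta Q}(V)$, with $V$ viewed as a $\Delta Q$-module. Hence every vertex $D$ of every indecomposable summand is $(Q\times G)$-conjugate to $\Delta S$ for some $S\le Q$. This gives $D\cap(1\times G)=1$ and $|D|=|S|\le |Q|$. Equality forces $D$ to be conjugate to $\Delta Q$, which Lemma~\ref{lem5}~(ii) excludes for summands of $W\tenO j\OG$. Your detour through $R'\times G$ then becomes unnecessary, and with this repair the argument is complete.

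\paragraph{A minor point.} $V$ is not assumed to be defined over the unramified subring at this stage, so your appeal to that in the second step is not available from the hypotheses. The paper invokes Lemma~\ref{lem3} in exactly the same way, though. The standard argument for Lemma~\ref{lem3}, via $\End_\CO(V)\cong\CO\oplus(\text{summands with vertices strictly smaller than } P)$, does not use this hypothesis.
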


\begin{proof}
This follows from  Lemma \ref{lem3}; the statement on $V_Q$ being
a source follows from the fact that $jB$ itself  is a
trivial source $\CO(Q\times G)$-module (with vertex $\Delta Q$).
\end{proof}

\begin{Lemma} \label{lem7}
Let $Q$ be a nontrivial subgroup of $P$. 
Let $j'$ be a primitive local idempotent in $(A')^Q$. 
The $\OQ$-$B$-bimodule $j'M$ has a unique non-projective indecomposable 
summand in any decomposition of $j'M$ as a direct sum
of indecomposable $\OQ$-$B$-bimodules. 
\end{Lemma}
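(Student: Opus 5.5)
Regard $j'M$ as an $\CO(Q\times G)$-module via $\OQ \to (A')^Q$; since $j'$ is an idempotent in $(B')^Q$, this is a direct summand of $\Res^{G'\times G}_{Q\times G}(M)$. The plan has two halves: existence of at least one non-projective summand, and uniqueness of such a summand.

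\emph{Existence.} This uses the stable equivalence. The relation $M^*\ten_{B'}M\cong B\oplus X$ is not directly useful here. Instead, note that $j'M\ten_B M^*\cong j'B'\oplus j'X'$ as $\OQ$-$B'$-bimodules. Since $j'$ is local, $\Br_Q(j')\neq0$, so $j'B'$ has a summand with vertex $\Delta Q\neq 1$; in particular it is not projective as a bimodule. On the other hand $j'X'$ is projective, since $X'$ is a projective $B'$-$B'$-bimodule and $j'X'$ is a summand of its restriction. If $j'M$ were projective as an $\OQ$-$B$-bimodule, then $j'M\ten_B M^*$ would be projective, because $M^*$ is projective as a left $B$-module and a projective bimodule tensored with a one-sided projective module stays projective. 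That would contradict the existence of the summand with vertex $\Delta Q$. Hence $j'M$ has a non-projective summand.

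\emph{Uniqueness.} Use the description $i'Mi\mid A'\tenOP\Ind^{P\times P}_{\Delta P}(V)\tenOP A$. Since $M$ is a summand of $B'i'\ten_{\OP}\Ind(V)\ten_{\OP} iB$ and $j'\in i'B'i'$, the module $j'M$ is a summand of
$$j'B'i'\tenOP \Ind^{P\times P}_{\Delta P}(V)\tenOP iB\ \cong\ j'B'i'\tenOP (V\tenO iB),$$
with $P$ acting diagonally on $V\tenO iB$. Decompose $j'B'i'$ as an $\OQ$-$\OP$-bimodule: it is a $p$-permutation module whose summands are of the form $\OQ\ten_{\CO R}{}_\varphi\OP$ for twisted diagonal vertices. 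Then:
\begin{itemize}
\item Summands with $|R|<|Q|$ contribute modules induced from $R\times G$. By Lemma~\ref{lem5}(i), together with the twisted version noted in the Remark, these have no vertex containing a twisted diagonal copy of $Q$.
\item Summands with $R=Q$ correspond to local points of $Q$ on $A'$ via Lemma~\ref{lem25}. Tensoring gives modules of the shape $V_Q\tenO j_1\OG$, where $j_1$ is a local idempotent in $A^{\varphi(Q)}$. By Lemma~\ref{lem6}, each of these has a unique summand with vertex $\Delta Q$ (twisted), and all other summands have strictly smaller vertices.
\end{itemize}
So the summands of $j'M$ with vertex of order $|Q|$ are controlled, but summands with smaller nontrivial vertices also appear and must be ruled out.

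\emph{Main obstacle.} A counting argument alone does not give uniqueness. I would instead argue via the stable equivalence. Let $N$ be the sum of the non-projective summands of $j'M$. The functor $-\ten_B M^*$ induces a bijection between isomorphism classes of non-projective indecomposable $\OQ$-$B$-bimodules and non-projective indecomposable $\OQ$-$B'$-bimodules, because $-\ten_B M^*\ten_{B'}M\cong -\oplus$ (projective). Applying it, $N\ten_B M^*$ is stably isomorphic to $j'B'$. Now $j'B'$ is indecomposable as an $\OQ$-$B'$-bimodule, since $\End(j'B')\cong (j'B'j')^{\mathrm{op}}$ is local because $j'$ is primitive in $(B')^Q$. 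Hence $N\ten_B M^*$ has a single non-projective summand, and so $N$ is indecomposable. This argument also reproves existence, and it is the route I would actually write down; the vertex analysis above then serves only to identify $N$, namely as having vertex $\Delta Q$ with source $V_Q$.
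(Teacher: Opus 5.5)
Your final argument is the paper's proof. You transport $j'M$ along $-\tenB M^*$, use $j'M\tenB M^*\cong j'B'\oplus j'X'$ with $j'B'$ indecomposable and non-projective (vertex $\Delta Q\neq 1$), and invoke the stable equivalence that $M$ induces between $\OQ$-$B$- and $\OQ$-$B'$-bimodules; the vertex analysis is not needed for this lemma.

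Two small corrections:
\begin{itemize}
\item The stable equivalence (Brou\'e's result, cited in the paper) applies only to bimodules that are projective as left $\OQ$-modules. This does hold for $j'M$ and its summands, because $M$ is projective as a left $B'$-module, but you should say so.
\item The endomorphism algebra of $j'B'$ is $(j'B'j')^Q=j'(B')^Qj'$, which is local because $j'$ is primitive in $(B')^Q$. It is not $(j'B'j')^{\mathrm{op}}$, and $j'B'j'$ itself need not be local.
\end{itemize}
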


\begin{proof}
By a result of Brou\'e (see e.g.  \cite[Proposition 2.17.11]{LiBookI}),
the functors $-\ten_{B'}M$ and $-\ten_B M^*$ induce an equivalence between
the stable categories of perfect $\OQ$-$B$-modules and perfect $\OQ$-$B'$-modules.
We have $j'M'\tenB M^*\cong$ $j'B'\oplus j'X'$,
and hence $j'B'$ is, up to isomorphism the unique non-projective
$\OQ$-$B'$-bimodule summand on the right side. Thus $j'M'$ itself has exactly one
non-projective $\OQ$-$B'$-bimodule summand in any direct sum decomposition.
The Lemma follows.
\end{proof}

\begin{Lemma} \label{lem75}
Let  $Q$ be a fully $\CF$-centralised nontrivial subgroup of $P$.
For any primitive local idempotent $j'$ in $(A')^Q$ there is a primitive local
idempotent $j$ in $A^Q$ such that  the up to isomorphism 
unique indecomposable direct summand of the $\OQ$-$B$-module 
$V_Q\tenO jB$ with vertex $\Delta Q$ is isomorphic to the non-projective 
indecomposable direct summands of $j'M$.
\end{Lemma}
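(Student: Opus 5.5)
Fix $Q$ and $j'$ as in the statement. By Lemma \ref{lem7}, $j'M$ has up to isomorphism a unique non-projective indecomposable summand $N$ as an $\OQ$-$B$-bimodule. The plan is to identify $N$ by locating $j'M$ inside
$$j'B'i'\tenOP \Ind^{P\times P}_{\Delta P}(V)\tenOP iB,$$
writing $iB$ as a sum of modules $jB$, and using the vertex results of Lemmas \ref{lem3}, \ref{lem5} and \ref{lem6}.

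First decompose $j'A'=j'B'i'$ as an $\CO(Q\times P)$-module. It is a $p$-permutation module, so each indecomposable summand has the form $\OQ\ten_{\CO R}{}_\varphi\OP$ for some $R\le Q$ and injective $\varphi: R\to P$ in $\CF$. Since $j'$ is local, Lemma \ref{lem25} shows that summands with $R=Q$ do occur. Tensoring with $\Ind^{P\times P}_{\Delta P}(V)\tenOP iB$, a summand with $R=Q$ gives
$$\Res_\varphi(V)\tenO iB\cong {}_\varphi(V\tenO iB)$$
as $\OQ$-$B$-bimodules. Write $i=\sum j$ as a sum of primitive idempotents in $A^{\varphi(Q)}$. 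Then $\Res_\varphi(V)\tenO jB$ has, by Lemma \ref{lem6}, a unique summand with vertex twisted-diagonal of order $|Q|$ when $j$ is local, and all its other summands have smaller vertices. A summand with $R<Q$ gives, by Lemma \ref{lem5}(ii) and the Remark following it, only summands whose vertices do not contain any twisted diagonal copy of $Q$.

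Next use $\CF$-stability of $V$ to get $\Res_\varphi(V_{\varphi(Q)})\cong V_Q$. Use that $Q$ is fully $\CF$-centralised to transport the local idempotent $j$ of $A^{\varphi(Q)}$ to a primitive local idempotent of $A^Q$. This is done via an element of $G$ (Puig's theory: conjugation in $\CF$ is realised by $(P\times P)$-stable structure of $A$, giving ${}_\varphi jB\cong \tilde jB$ for a local $\tilde j\in A^Q$). Then every summand of $j'M$ with vertex of order $|Q|$ has the form of the $\Delta Q$-vertex summand of $V_Q\tenO \tilde jB$.

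It remains to show that $N$ has vertex $\Delta Q$; it is then one of these summands and the lemma follows. We argue by contradiction. If all summands of $j'M$ had vertex of order less than $|Q|$, apply $-\ten_B M^*$. Using $M^*\ten_{B'}M\cong B\oplus X$ and Lemma \ref{lem7} (in its proof, $j'M\ten_BM^*$ has $j'B'$ as non-projective summand), $j'B'$ would then be a summand of modules with vertices not containing $\Delta Q$. This contradicts Lemma \ref{lem25}, since $j'$ is local and so $j'B'$ has vertex $\Delta Q$. We must also check that summands of $j'M$ with small vertex that are non-projective cannot be the unique non-projective $N$ while a $\Delta Q$-summand exists. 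This follows because $\Delta Q$-vertex summands are non-projective ($Q\ne1$), and by uniqueness $N$ must be that summand.

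The main obstacle is the second step: replacing a twisted diagonal $\Delta_\varphi Q$ by $\Delta Q$ with a genuinely local idempotent of $A^Q$. This is where full centralisation of $Q$ and $\CF$-stability of $V$ are needed. I would first try to realise $\varphi$ as conjugation by some $g\in G$ with ${}^gQ\le P$, and set $\tilde j = g^{-1}jg$ adjusted into $A^Q$ via the bijection between local points of $Q$ on $A$ and on $B$.
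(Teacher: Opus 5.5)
Your proposal is correct and follows essentially the paper's route: embed $j'M$ in $j'B'i'\tenOP\Ind^{P\times P}_{\Delta P}(V)\tenOP iB$, use that $j'B'$ (which has vertex $\Delta Q$) is a summand of $j'M\tenB M^*$ together with Lemma \ref{lem5} to force $R=Q$ and a vertex of order $|Q|$ for $N$, and then use Lemma \ref{lem6}, $\CF$-stability of $V$ and full centralisation of $Q$ to pass from a local point of $\varphi(Q)$ to a local point of $Q$ on $A$; the paper packages that last step as a citation of \cite[Theorem 8.7.4]{LiBookII}. The slips are cosmetic: the isomorphism you need is $M\tenB M^*\cong B'\oplus X'$, the case $R<Q$ is Lemma \ref{lem5}(i) via $\OQ\tenOR(-)$, and the non-local primitive $j$ in the decomposition of $i$ should be discarded explicitly, since by Higman's criterion they contribute only summands with vertices of order less than $|Q|$.
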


\begin{proof}
The assumptions on $M$ imply  that $j'M$ is isomorphic to
a direct summand of the $\OQ$-$B$-bimodule
$j'B'i'\tenOP \Ind^{P\times P}_{\Delta P}(V)\tenOP iB$.
Thus an indecomposable non-projective summand $N$ of $j'M$ is
isomorphic to an indecomposable $\OQ$-$B$-bimodule summand of 
$W \tenOP \Ind^{P\times P}_P(V)\tenOP iB$ for some indecomposable
$\OQ$-$\OP$-bimodule summand $W$ of $j'B'i'$. By \cite[Theorem 8.7.1]{LiBookII},
$W\cong$ $\OQ\tenOR ({_\psi{\OP}})$ for some injective group homomorphism
$\psi : \OR\to$ $\OP$ in the fusion system $\CF$. Here, and similarly later, 
${_\psi\OP}$ denotes
the $\OR$-$\OP$-bimodule which is equal to $\OP$ as a right $\OP$-module,
with $v\in R$ acting on the left as multiplication by $\psi(v)$.
Recall that $M\tenB M^*\cong$ $B'\oplus X'$ for some projective
$B'$-$B'$-bimodule, so this is a $p$-permutation $\CO(G\times G)$-module,
and hence $j'M\tenB M^*j'$ is a permutation $\CO(Q\times Q)$-module.
Now $j'M\tenB M^*j'$ has $j'B'j'$ as a direct $\CO(Q\times Q)$-module
summand, hence, by Lemma \ref{lem25},  has a direct $\CO(Q\times Q)$-module 
summand isomorphic  to $\OQ$, and this summand has vertex  $\Delta Q$.
Lemma \ref{lem5} implies that $R=Q$,
and that the vertices of $N$ have order $|Q|$. Thus $W\cong$ ${_\psi{\OP}}$ for
some morphism $\psi : Q\to P$ in $\CF$, and hence
the summand $N$ of $j'M$ is isomorphic to a direct summand of 
${_\varphi{\Ind^{P\times P}_{\Delta P}(V)\tenOP iB}}$. Using the
indecomposability of $N$ we get that $N$ is isomorphic to a direct
summand of ${_\varphi{(V_{\varphi(Q)})\tenO nB}}$ for some primitive
 idempotent $n$ in $A^{\varphi(Q)}$. Since $N$ has vertices of order $|Q|$, the
 idempotent $n$ belongs to a local point of $Q$ on $A$.
The fusion stability of $V$ and \cite[Theorem 8.7.4]{LiBookII}
 imply together  that $N$ is
isomorphic to a direct summand of $V_Q\tenO jB$ for some
primitive local idempotent $j$ in $A^Q$. The result follows.
\end{proof}

\begin{Remark}
One can show that the correspondence $j'\mapsto j$ induces a bijection
between the sets of local points of $Q$ on $A'$ and on $A$ which is
compatible with fusion in $A$ and $A'$. This
can be used to give an alternative proof of a result of Puig
stating that $A$ and $A'$ have the same categories of nontrivial local pointed
groups (which is slightly stronger than merely stating that the
fusion systems are preserved under stable equivalences of Morita
type with endopermutation source). 
\end{Remark}

\begin{Lemma} \label{lem8}
With the notation above, assume that the character $\chi_V$ of $V$ as values
in $\Z$. Let $U$ be a finitely generated $\CO$-free
$B$-module.  Denote by $\chi$ the character of $U$ and by $\psi$ the
character of $M\tenB U$. Then, up to signs and up a suitable order,
the non-ordinary generalised deomposition numbers of $\chi$ and
$\psi$ coincide.
\end{Lemma}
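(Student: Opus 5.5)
We use Puig's description $d^u_{\chi,\varphi}=\chi(uj)$ together with Lemma \ref{lem75}. Let $u\in P$ be nontrivial; up to $\CF$-conjugation we may take $Q=\langle u\rangle$ fully $\CF$-centralised. By Brauer's Second Main Theorem and the fact that $i,i'$ are source idempotents, the relevant non-ordinary decomposition numbers of $\psi$ are the values $\psi(uj')=\chi_{j'(M\tenB U)}(u)$. Here $j'$ runs over the primitive local idempotents in $(A')^Q$, one for each $\varphi'\in\IBr(C_{G'}(u))$ in blocks $e'$ with $(Q,e')$ a $B'$-Brauer pair. Similarly, $d^u_{\chi,\varphi}=\chi(uj)=\chi_{jU}(u)$ for $j$ primitive local in $A^Q$. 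Thus we must compare the trace of $u$ on the $\OQ$-modules $j'M\tenB U$ and $jU$.

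First we decompose $j'M$ as an $\OQ$-$B$-bimodule. By Lemma \ref{lem7} it has the form $N\oplus Y$ with $N$ the unique non-projective summand and $Y$ projective as an $\CO(Q\times G)$-module. Then $Y\tenB U$ is a projective $\OQ$-module, since $U$ is $\CO$-free and $Y\cong \OQ\tenO Z$ for some $B$-module $Z$ up to summands. Hence its character vanishes at $u\neq 1$. So $\psi(uj')=\chi_{N\tenB U}(u)$.

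Next, Lemma \ref{lem75} gives $j$ with $N$ isomorphic to the unique summand of $V_Q\tenO jB$ with vertex $\Delta Q$. By Lemma \ref{lem6} applied to $V_Q$, the other summands of $V_Q\tenO jB$ have vertices of order smaller than $|Q|=|\langle u\rangle|$. Any such vertex is contained in some $R\times G$ with $R<Q$ (this is the content of Lemma \ref{lem5}), so those summands are relatively $\CO(R\times G)$-projective. Tensoring over $B$ with $U$, they therefore yield $\OQ$-modules induced from the proper subgroup $R$, whose characters vanish at the generator $u$. Consequently
$$\psi(uj')=\chi_{(V_Q\tenO jB)\tenB U}(u)=\chi_{V_Q}(u)\,\chi_{jU}(u)=\chi_{V_Q}(u)\, d^u_{\chi,\varphi}.$$

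Since $V$, and hence $V_Q$, is defined over the unramified subring with $\Z$-valued character and $Q$ is cyclic, we have $\chi_{V_Q}(u)\in\{\pm1\}$. This gives the signs.

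It remains to see that $j'\mapsto j$ is a bijection between local points of $Q$ on $A'$ and on $A$; this produces the ``suitable order''. Injectivity follows because $j$ is determined up to conjugacy by $N$: the summand of $V_Q\tenO jB$ with vertex $\Delta Q$ recovers $jB$ after tensoring with $V_Q^*$ and applying Lemma \ref{lem3}. Surjectivity follows by symmetry, applying the same argument to $M^*$, whose source is $V^*$ and which satisfies the same hypotheses. I expect the main obstacle to be this bijectivity, together with the cancellation of the non-$\Delta Q$ summands at $u$. For the latter, one must check that vertices of order less than $|Q|$ really force induction from proper subgroups of $Q$ on the left. This holds because vertices of summands of $V_Q\tenO jB$ lie inside $Q\times G$ and project into $Q$; if such a projection were all of $Q$, the vertex would contain a twisted diagonal of order $|Q|$.
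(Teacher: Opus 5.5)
Your central computation is the paper's own argument. You split $j'M=N\oplus Y$ using Lemma \ref{lem7} and discard $Y\tenB U$ as a projective $\OQ$-module. You then identify $N$ with the $\Delta Q$-summand of $V_Q\tenO jB$ via Lemma \ref{lem75}, kill the remaining summands at $u$, and read off $\psi(uj')=\chi_{V_Q}(u)\,\chi(uj)$ with $\chi_{V_Q}(u)=\pm1$. The step you flag as an obstacle is immediate. If $S$ is a vertex with $|S|<|Q|$, its image $R$ under the first projection $Q\times G\to Q$ has $|R|\le|S|<|Q|$. So the summand is relatively $R\times G$-projective, hence a summand of $\OQ\tenOR(-)$. (Lemma \ref{lem5} is the converse statement, not this one.)

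The gap is in your bijectivity paragraph. What you call injectivity (that $N$ recovers $jB$ after tensoring with $V_Q^*$) shows that $N$ determines the local point of $j$. That is, it shows $j'\mapsto j$ is well defined on local points. Injectivity is the opposite implication. The point of $j$ trivially determines $N$, as the unique $\Delta Q$-summand of $V_Q\tenO jB$, so what you need is that $N$ determines the point of $j'$.

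This requires the stable equivalence, which your argument never uses at this step. From $j'M\tenB M^*\cong j'B'\oplus j'X'$, together with the projectivity of $Y\tenB M^*$ and $j'X'$, Krull--Schmidt gives $N\tenB M^*\cong j'B'\oplus(\text{projective})$. So $N$ determines $j'B'$ up to isomorphism. An isomorphism $j'B'\cong j''B'$ of $\OQ$-$B'$-bimodules then forces $j'$ and $j''$ to be conjugate in $(A')^Q$. With this added, you get injections in both directions (the second from $M^*$), hence a bijection. You would then have proved the stronger statement in the Remark after Lemma \ref{lem75}.

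The paper avoids the local-point bijection altogether. It proves only that each non-ordinary generalised decomposition number of $\psi$ is $\pm$ one of those of $\chi$, and applies this to $M^*$ and $M\tenB U$. It then uses $M^*\ten_{B'}M\tenB U\cong U\oplus T$ with $T$ projective, together with Lemma \ref{lem1}, to get the reverse inclusion. This set-level coincidence up to sign is all that Theorem \ref{thm1} needs.
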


\begin{proof}
We note first that
it suffices to show that every non-ordinary generalised decomposition
number of $\psi$ is up to sign  a non-ordinary generalised  
decomposition
number of $\chi$. Indeed, if this statement holds in general, then 
that same statement applied to the $B'$-module $M\tenB U$ and 
the $B$-$B'$-bimodule $M^*$ shows 
that every non-ordinary generalised decomposition number of $M^*\ten_{B'} M\tenB U$ is
also, up to a sign, a non-ordinary generalised decmposition number of
$M\tenB U$. Since $M^*\ten_{B'} M\tenB U\cong$ $U\oplus T$ for some projective
$B$-module $T$ it follows from Lemma \ref{lem1} that  the non-ordinary
generalised decomposition numbers of $U$ and of $M^*\ten_{B'} M\tenB U$
coincide, and hence up to indexing, coincide with those of $M\tenB U$ as well.

Let $u$ be a nontrivial $p$-element in $P$. Set $Q=\langle u\rangle$.
Since $\chi$ and $\psi$ are class functions, we may assume that $Q$ is
fully $\CF$-centralised.

Note that the character of any finitely generated $\CO$-free indecomposable 
$\OQ$-module with vertex a proper subgroup of $Q$ vanishes at $u$.
if $j$ be a primitive local idempotent in $A^Q$, then the generalised 
decomposition number of $\chi$ at $u$ and the irreducible Brauer character of $kC_G(u)$ 
determined by $j$ is equal to $\chi(uj)$. Similary, if $j'$ is a primitive local
idempotent in $(A')^Q$ then the generalised decomposition number  of
$\psi$ at $u$ and the irreducible  Brauer character  of $kC_{G'}(u)$ determined
by $j'$ is equal to $\psi(uj')$. 

Fix a primitive local idempotent $j'$ in $(A')^Q$. By the above, we need to 
show that there is a primitive local idempotent $j$ in $A^Q$ such that
$\chi(uj)=$ $\pm\psi(uj')$.

By Lemma \ref{lem75}  there is a primitive local
idempotent $j$ in $A^Q$ such that  the up to isomorphism 
unique indecomposable direct summand of the $\OQ$-$B$-module 
$V_Q\tenO jB$ with vertex $\Delta Q$ is isomorphic to the non-projective 
indecomposable direct summands of $j'M$.

Since the character of an $\OQ$-module vanishes at $u$ unless possibly
if $u$ belongs to a vertex of a summand of this module, it follows that
the characters of the left $\OQ$-modules $j'M\tenB U$ and
$V_Q\tenO jU$ coincide when evaluated at $u$. Moreover, the character of $V_Q$
evaluated at $u$ is in $\{1,-1\}$. (We use here the hypothesis that $V$  has a 
$\Z$-valued character.) Thus the evaluations at $u$ of the characters
of $V_Q\ten jU$ and $jU$ differ at most by a sign.
Together it follows that the generalised decomposition numbers
$\chi(uj)$ and $\psi(uj')$   differ at most by a sign. The result follows.
\end{proof}

\begin{proof}[Proof of Theorem \ref{thm2}]
By Lemma \ref{lem8},  given an $\CO$-free $B$-module of finite
$\CO$-rank, the characters of $U$ and of $M\tenB U$ have the same
generalised non-ordinary decomposition numbers. Thus the first
statement follows from Theorem \ref{thm1}.
Let $\psi\in$ $\Z\Irr(B)$.  Denote by $-\cdot_B-$ the maps on Grothendieck
groups induced by the functor $-\tenB-$. 
By \cite[Proposition 3.3]{KL10} the generalised character
 $\sum_{\chi\in\Irr(B)}\ \Phi(\chi) \chi^*  - \chi_M$ is in fact a generalised
projective character of $G'\times G$. Here $\chi^*$ is defined by
$\chi^*(g)=\chi(g^{-1})$ for all $g\in G$. 
Thus  
$$\Phi(\psi)-\Phi_M(\psi) = (\sum_{\chi\in\Irr(B)}\ \Phi(\chi) \chi^*  - \chi_M)\cdot_B \psi$$
is a generalised projective character in $\Z\Irr(B')$. 
Therefore, if $u$ is a nontrivial $p$-element in $G'$ and $j$ an idempotent belonging to
a local point of $\langle u\rangle$ on $B'$, then this character evaluated at elements
of the form $vj$, with $v\in \langle u\rangle$ 
yields a generalised projective character of $\CO\langle u\rangle$, hence vanishes
for $u$ a nontrivial $p$-element in $G'$. Thus the generalised characters
$\Phi(\psi)=(\sum_{\chi\in\Irr(B)}\ \Phi(\chi) \chi^*)\cdot_B \psi$ and
$\Phi_M(\psi) = \chi_M\cdot_B \psi$ have the same generalised non-ordinary
decomposition numbers. As before, Theorem \ref{thm1} implies that they have 
conductors whose $p$-parts are equal.
Combined with the first statement and 
applied with $\chi\in\Irr(B)$ instead of $\psi$  implies 
the last statement in Theorem \ref{thm2}.
\end{proof}

As mentioned in the Introduction, the Corollaries \ref{Cor2} and \ref{Cor3}
can  be played back more directly to Theorem \ref{thm1} by making use of
long standing results on generalised decomposition numbers in these cases;
we give some references in the proofs below, which circumvent the need to use 
Theorem \ref{thm2}. In the case of blocks with a cyclic defect group, the 
generalised decomposition numbers are due to Dade \cite{Dadecyclic}, 
except for the explicit connection between the signs that appear there on the one hand, 
and the endopermutation module $V$ and perfect isometry on the other hand.

\begin{proof}[Proof of Corollary \ref{Cor2}]
Assume that $P$ is cyclic. By \cite[Th\'eor\`eme (13.2), Th\'eor\`eme (13.6)]{Lithese}
there is an isometry such that the non-ordinary generalised decomposition numbers
of corresponding irreducible characters are equal up to signs. This isometry is
automatically perfect by \cite[Proposition 3.3]{KL10}. The fact that this isometry
is perfect  is also shown  in \cite{Licycliccentre};  see 
\cite[Theorem 11.1.14]{LiBookII} for a broader expository account.
The result follows from Theorem \ref{thm1}.
\end{proof}

 \begin{proof}[Proof of Corollary \ref{Cor3}]
Note that $C$ has $\CO(P\rtimes E)$ as source algebra. Thus the hypotheses
and \cite[Theorem 5.10.5]{LiBookII} imply the existence of a perfect isometry
which preserves generalised decomposition numbers up to signs.
Thus the result follows again from Theorem \ref{thm1}.
\end{proof}

\section{Some remarks on conductors and restriction to blocks of subgroups}
\label{Res-Section}

Restriction and induction functors between blocks of finite groups
are given by bimodules with trivial source, so they differ in general from the
stable equivalences of Morita type with endopermutation source considered
in the main results of this paper. We mention some cases where we do get
results on restrictions and truncated restrictions.

\begin{Proposition} \label{PropIntro1}
Let $G$ be a finite group. Assume that $K$ contains a primitive $|G|$-th root 
of unity.  Let $\chi\in$ $\Z\Irr(G)$, and let  $u\in G_p$ and $\varphi\in$
$\IBr(C_G(u))$ such that $c(\chi)_p=c(d^u_{\chi,\varphi})$.
Let  $H$ be a subgroup of $G$ containing $C_G(u)$. We have
$c(\chi)_p =$ $c(\Res^G_H(\chi))_p$.
\end{Proposition}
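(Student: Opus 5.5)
The plan is to prove the two inequalities $c(\Res^G_H(\chi))_p\le c(\chi)_p$ and $c(\chi)_p\le c(\Res^G_H(\chi))_p$ separately.

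The first inequality is trivial. The values of $\Res^G_H(\chi)$ form a subset of the values of $\chi$, so $c(\Res^G_H(\chi))$ divides $c(\chi)$. In particular the $p$-parts satisfy the same divisibility.

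For the reverse inequality I apply Theorem \ref{thm1} to the group $H$ and the generalised character $\Res^G_H(\chi)\in\Z\Irr(H)$. The hypothesis on $K$ holds for $H$, since $|H|$ divides $|G|$. Theorem \ref{thm1} then gives
$$c(\Res^G_H(\chi))_p\geq c(d^u_{\Res^G_H(\chi),\varphi}).$$
This needs $u\in H_p$, which holds because $u\in C_G(u)\subseteq H$. It also needs $\varphi\in\IBr(C_H(u))$, which holds because $C_H(u)=C_G(u)$ when $C_G(u)\le H$.

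The key step is to check that $d^u_{\Res^G_H(\chi),\varphi}=d^u_{\chi,\varphi}$. For every $s\in C_H(u)_{p'}=C_G(u)_{p'}$ we have
$$\Res^G_H(\chi)(us)=\chi(us)=\sum_{\varphi\in\IBr(C_G(u))}d^u_{\chi,\varphi}\,\varphi(s).$$
The generalised decomposition numbers of $\Res^G_H(\chi)$ at $u$ are defined by exactly this expansion over $\IBr(C_H(u))=\IBr(C_G(u))$. Since the irreducible Brauer characters are linearly independent, the expansion is unique and the two sets of coefficients agree.

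Combining this with the hypothesis on $u$ and $\varphi$, we get
$$c(\Res^G_H(\chi))_p\ge c(d^u_{\chi,\varphi})=c(\chi)_p.$$
This completes the argument.

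The only point that needs care is the identification $C_H(u)=C_G(u)$, which is what makes the Brauer characters and the decomposition numbers literally the same. Everything else is formal.
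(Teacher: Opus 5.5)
Your proof is correct and follows essentially the same approach as the paper: the trivial divisibility for the restriction, combined with Theorem \ref{thm1} applied to $H$ at the element $u$, using $C_H(u)=C_G(u)$. The only difference is how you see that $d^u_{\Res^G_H(\chi),\varphi}=d^u_{\chi,\varphi}$. You get it directly from uniqueness of the Brauer character expansion, whereas the paper goes through the idempotent formula $d^u_{\chi,\varphi}=\chi(uf)$ with $f\in\CO C_G(u)\subseteq\CO H$ from Lemma \ref{lem05}; your route is slightly more elementary.
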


\begin{proof}
 Clearly $c(\Res^G_H(\chi))_p\leq$
$c(\chi)_p$. For the other inequality, note that Lemma \ref{lem05} and 
Theorem \ref{thm1} imply that $c(\chi)_p$ is the maximum of the numbers
$\chi(vf)$, with $v\in G_p$ and $f$ running over the primitive idempotents
in $\CO C_G(v)$. Applied to $v=u$ and using the hypotheses, there is
a primitive idempotent $f\in$ $\CO C_G(u)$ such that
$c(\chi)_p=$ $c(d^u_{\chi,\varphi}) =$ $c(\chi(uf)) \leq$ $c(\Res^G_H(\chi))_p$.
\end{proof}

Given a generalised character $\chi$  of a finite group algebra $\OG$
and a central idempotent $b$ in $Z(\OG)$ we denote by $b\cdot \chi$ the
component of $\chi$ belonging to $\OGb$; that is, $(b\cdot \chi)(g)=$ 
$\chi(bg)$  for all $g\in G$. 

\begin{Lemma} \label{lem10}
Let $G$ be a finite group and let $b$, $b'$ be orthogonal idempotents
in $Z(\OG)$. Let $\chi\in$ $\Z\Irr(G)$ such that $\chi=$ $(b+b')\cdot \chi$.
We have  $c(\chi)_p =$ $\max\{ c(b\cdot \chi)_p, c(b'\cdot\chi)_p\}$. 
\end{Lemma}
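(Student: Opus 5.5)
We need to show $c(\chi)_p = \max\{c(b\cdot\chi)_p, c(b'\cdot\chi)_p\}$. The plan is to prove the two inequalities separately. Each uses only the definition of the conductor, the fact that $\Q(\zeta_m)\cap\Q(\zeta_n)=\Q(\zeta_{\gcd(m,n)})$, and the observation that $p$-parts of conductors are powers of $p$ and hence totally ordered under divisibility.

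For the inequality $c(\chi)_p \leq \max\{c(b\cdot\chi)_p, c(b'\cdot\chi)_p\}$, we use $\chi=(b+b')\cdot\chi = b\cdot\chi + b'\cdot\chi$ as class functions. Both $b\cdot\chi$ and $b'\cdot\chi$ are generalised characters, being $\Z$-linear combinations of irreducible constituents of $\chi$ lying in the respective blocks. Set $p^a=\max\{c(b\cdot\chi)_p,c(b'\cdot\chi)_p\}$ and $n'=|G|_{p'}$. All values of $b\cdot\chi$ and of $b'\cdot\chi$ lie in $\Q(\zeta_{p^a n'})$, since each conductor divides $|G|$ and has $p$-part dividing $p^a$. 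Hence all values of $\chi$ lie in that field too, and $c(\chi)$ divides $p^an'$, giving $c(\chi)_p\leq p^a$.

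For the reverse inequality, it suffices to show $c(b\cdot\chi)_p\leq c(\chi)_p$, and symmetrically for $b'$. The most robust route is via Theorem \ref{thm1}. Write $b=\sum_{g} \beta_g g$ with $\beta_g\in\CO$. The coefficients of a central idempotent of $\OG$ lie in $\Q(\zeta_{n'})$: they are sums of block idempotents, whose coefficients are given by the formula $e_\theta=\frac{\theta(1)}{|G|}\sum_g\theta(g^{-1})g$ summed over $\theta$ in a union of blocks, and are rational over the $p'$-roots of unity because block idempotents are defined over the unramified subring. This step is the main obstacle, and I expect to justify it by noting that primitive idempotents of $Z(\OG)$ lift from $Z(kG)$, and $Z(kG)$ is already defined over $\F_p(\zeta_{n'})$ with $k$ replaced by the splitting field for $kG$. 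Alternatively one can avoid this entirely using Brauer's Second Main Theorem: for $u\in G_p$ and $\varphi\in\IBr(C_G(u))$ lying in a block $f$ of $kC_G(u)$, the generalised decomposition number $d^u_{\chi,\varphi}$ equals $d^u_{b\cdot\chi,\varphi}$ or $d^u_{b'\cdot\chi,\varphi}$, according to whether $\Br_{\langle u\rangle}(b)f\neq 0$ or $\Br_{\langle u\rangle}(b')f\neq0$. The other of the two is then zero, since $d^u_{\chi,\varphi}=d^u_{b\cdot\chi,\varphi}+d^u_{b'\cdot\chi,\varphi}$ and the Second Main Theorem kills the non-matching summand.

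With this in hand, the set of generalised decomposition numbers of $b\cdot\chi$ is contained in that of $\chi$ together with $0$. Theorem \ref{thm1} then gives $c(b\cdot\chi)_p\leq c(\chi)_p$, and likewise for $b'$. Combining the two inequalities yields the equality in Lemma \ref{lem10}. The Second Main Theorem route via Theorem \ref{thm1} is the one I would write out, since it uses only results already invoked in the paper.
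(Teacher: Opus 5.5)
Your proof is correct. For the substantive inequality $c(b\cdot\chi)_p\leq c(\chi)_p$ it is the paper's argument: Brauer's Second Main Theorem distributes the generalised decomposition numbers of $\chi$ between $b\cdot\chi$ and $b'\cdot\chi$ (up to zeros), and Theorem \ref{thm1} turns this into the statement about conductors. The only difference is that the paper reads off both inequalities from this partition at once, since for sets of $p$-power cyclotomic integers the conductor of a union is the maximum. You instead get $c(\chi)_p\leq\max\{c(b\cdot\chi)_p,c(b'\cdot\chi)_p\}$ directly from $\chi=b\cdot\chi+b'\cdot\chi$, which is equally valid and slightly more elementary.
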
 

\begin{proof}
The generalised decomposition numbers are partitioned according to
blocks (by Brauer's Second Main Theorem, which is implied directly by Puig's
description of these numbers). Thus Theorem \ref{thm1} implies the Lemma.
\end{proof}

\begin{Lemma} \label{lem11}
Let $G$ be a finite group, $B$ a block of $\OG$, $P$ a defect group of
$B$, and $H$ a subgroup of $G$ containing $N_G(P)$. Let $C$ be the block
of $\OH$ with $P$ as defect group which is the Brauer correspondent of $B$.
Let $\chi\in$ $\Z\Irr(B)$. If $c(\chi)_p=$ $c(1_C\cdot \Res^G_H(\chi))_p$,
then $c(\chi)_p=$ $c(\Res^G_H(\chi))_p$.
\end{Lemma}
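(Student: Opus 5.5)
We prove Lemma \ref{lem11} by comparing both sides to $c(\chi)_p$. The inequality $c(\Res^G_H(\chi))_p\leq c(\chi)_p$ holds trivially, because the values of $\Res^G_H(\chi)$ are among the values of $\chi$. So if a set of values lies in $\Q(\zeta_n)$, the restricted values lie there too, and hence $c(\Res^G_H(\chi))$ divides $c(\chi)$. It therefore remains to show $c(\chi)_p\leq c(\Res^G_H(\chi))_p$.

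For the reverse inequality we split $\Res^G_H(\chi)$ according to the block decomposition of $\OH$. Write $1=1_C+b'$ in $Z(\OH)$, where $b'=1-1_C$ is the sum of the remaining block idempotents. Then $1_C$ and $b'$ are orthogonal central idempotents and $\Res^G_H(\chi)=(1_C+b')\cdot\Res^G_H(\chi)$. Lemma \ref{lem10}, applied to the group $H$, the generalised character $\Res^G_H(\chi)\in\Z\Irr(H)$ and the idempotents $1_C$, $b'$, gives
$$c(\Res^G_H(\chi))_p=\max\{c(1_C\cdot\Res^G_H(\chi))_p,\ c(b'\cdot\Res^G_H(\chi))_p\}\geq c(1_C\cdot\Res^G_H(\chi))_p.$$
By hypothesis the right-hand side equals $c(\chi)_p$. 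This yields $c(\chi)_p\leq c(\Res^G_H(\chi))_p$, and combined with the trivial inequality we get equality.

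Two points need checking. First, Lemma \ref{lem10} requires $K$ to contain enough roots of unity, which is the standing assumption inherited from Theorem \ref{thm1}. Second, Lemma \ref{lem10} is stated for two orthogonal idempotents, and here $b'$ may be a sum of several block idempotents. This causes no difficulty: the block-partition argument via Brauer's Second Main Theorem in the proof of Lemma \ref{lem10} applies verbatim to any central idempotent $b'$, since $b'$ is a sum of block idempotents. In fact only the inequality $c(1_C\cdot\Res^G_H(\chi))_p\leq c(\Res^G_H(\chi))_p$ is needed, and this is the easy half of Lemma \ref{lem10}.

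The hypotheses $H\geq N_G(P)$ and the Brauer correspondence play no role beyond making $C$ a well-defined block of $\OH$. The main obstacle is therefore not real difficulty but making sure the direction of the inequality in Lemma \ref{lem10} is the one used here. If one prefers to avoid Lemma \ref{lem10}, this inequality can also be checked directly from Theorem \ref{thm1}: the generalised decomposition numbers of $1_C\cdot\Res^G_H(\chi)$ form a subset of those of $\Res^G_H(\chi)$, because generalised decomposition numbers are partitioned by blocks.
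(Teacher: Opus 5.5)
Your proof is correct and follows the paper's argument exactly: the chain $c(\chi)_p\geq c(\Res^G_H(\chi))_p\geq c(1_C\cdot\Res^G_H(\chi))_p$, with the second inequality from Lemma \ref{lem10}, combined with the hypothesis. Your worry about $b'=1-1_C$ being a sum of several block idempotents is unnecessary, because Lemma \ref{lem10} is already stated for arbitrary orthogonal idempotents in $Z(\OG)$.
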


\begin{proof}
We clearly have $c(\chi)_p \geq$ $ c(\Res^G_H(\chi))_p \geq$
$  c(1_C\cdot \Res^G_H(\chi))_p$, where the last inequality 
follows from Lemma \ref{lem10}. The result follows from the 
hypotheses.
\end{proof}

\begin{Proposition}\label{TI-Prop}
Let $G$ be a finite group, $B$ a block of $\OG$, and $P$ a defect group 
of $B$. Suppose that $H$ is a subgroup of $G$ containing $P$ such that
for all $g\in $ $G\setminus H$ we have $P\cap {^gP} = \{1\}$.
Then for every $\chi\in$ $\Z\Irr(B)$ we have 
$$c(\chi)_p = c(\Res^G_H(\chi))_p=  c(1_C\cdot \Res^G_H(\chi))_p.$$
\end{Proposition}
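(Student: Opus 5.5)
We would prove the chain of inequalities
$$c(\chi)_p\ \geq\ c(\Res^G_H(\chi))_p\ \geq\ c(1_C\cdot \Res^G_H(\chi))_p\ \geq\ c(\chi)_p .$$
First, the hypothesis forces $N_G(P)\leq H$: if $g\in N_G(P)$ then $P\cap {^gP}=P\neq\{1\}$, so $g\in H$. Here we may assume $P\neq\{1\}$; if $P=\{1\}$ all generalised decomposition numbers are rational integers and everything has $p$-part $1$ by Theorem \ref{thm1}. Hence the Brauer correspondent $C$ of $B$ in $\OH$ is defined, and the first two inequalities are exactly those in the proof of Lemma \ref{lem11}. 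The first is trivial, and the second follows from Lemma \ref{lem10}. So everything reduces to the last inequality, $c(\chi)_p\leq c(1_C\cdot\Res^G_H(\chi))_p$.

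For that, we use Theorem \ref{thm1} (or Corollary \ref{Cor05}). The ordinary decomposition numbers ($u=1$) are rational integers and do not contribute. By Brauer's Second Main Theorem, a non-ordinary $d^u_{\chi,\varphi}$ is zero unless $u$ is conjugate into $P$. Since $\chi$ is a class function, we may therefore take $u\in P\setminus\{1\}$. The key observation is that $C_G(u)\leq H$: if $g\in C_G(u)$ then $u\in P\cap{^gP}$ is nontrivial, so $g\in H$. Thus $C_G(u)=C_H(u)$. Taking a primitive idempotent $f\in\CO C_G(u)=\CO C_H(u)$ as in Lemma \ref{lem05}, applied in both $G$ and $H$, we get
$$d^u_{\chi,\varphi}=\chi(uf)=\Res^G_H(\chi)(uf)=d^u_{\Res^G_H(\chi),\varphi}.$$
So every non-ordinary generalised decomposition number of $\chi$ is one of $\Res^G_H(\chi)$.

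It remains to see that this number survives truncation by $1_C$. That is, we want $d^u_{\Res^G_H(\chi),\varphi}=d^u_{1_C\cdot\Res^G_H(\chi),\varphi}$ whenever the left side is nonzero. By the Second Main Theorem in $H$, $d^u_{b\cdot\Res^G_H(\chi),\varphi}$ vanishes unless $(\langle u\rangle,e)$ is a $b$-Brauer pair, where $e$ is the block of $kC_H(u)$ containing $\varphi$. So it suffices to show that $(\langle u\rangle,e)$ is a $C$-Brauer pair. Since $d^u_{\chi,\varphi}\neq 0$, the pair $(\langle u\rangle,e)$ is a $B$-Brauer pair. Hence, up to $G$-conjugation (which we absorb into the choice of $u$), it is contained in a maximal $B$-Brauer pair $(P,e_P)$ via a chain of normal inclusions $(R,f_R)\trianglelefteq(S,f_S)$ with $\langle u\rangle\leq R\leq S\leq P$.

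The main obstacle is checking that this chain is also a chain of $C$-Brauer pairs in $H$. The plan is to use two facts. Each inclusion condition $\Br_S(f_R)f_S=f_S$ takes place inside $kC_G(S)\leq kC_G(R)$. And all these centralisers lie in $H$, because $R\neq\{1\}$ and $R\leq P$. So the same chain is a chain of Brauer pairs for $H$, ending at $(P,e_P)$. Since $e_P^H=C$ by the Brauer correspondence (using $N_G(P)\leq H$), $(P,e_P)$ is a $C$-Brauer pair, and hence so is $(\langle u\rangle,e)$. Combined with Theorem \ref{thm1}, this gives $c(\chi)_p\leq c(1_C\cdot\Res^G_H(\chi))_p$ and completes the chain.
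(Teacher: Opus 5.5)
Your proof is correct. The first two inequalities are handled exactly as in the paper (trivially, then via Lemma~\ref{lem10}). For the decisive inequality $c(\chi)_p\leq c(1_C\cdot\Res^G_H(\chi))_p$, however, you take a genuinely different route.

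The paper gets the equality $c(\chi)_p=c(1_C\cdot\Res^G_H(\chi))_p$ in one step from module theory. Under the TI hypothesis, the bimodule $B1_C$ and its dual induce a stable equivalence of Morita type with trivial source between $B$ and $C$ (\cite[Theorem 9.8.6]{LiBookII}), so Theorem~\ref{thm2} applies.

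You never leave the level of generalised decomposition numbers:
\begin{itemize}
\item The observation $C_G(R)\leq H$ for every nontrivial $R\leq P$ gives $d^u_{\chi,\varphi}=d^u_{\Res^G_H(\chi),\varphi}$ for $1\neq u\in P$.
\item The normal inclusions $(R,f_R)\trianglelefteq(S,f_S)$ below $(P,e_P)$ are computed inside $kC_G(R)=kC_H(R)$, and $\Br_P(1_B)=\Br_P(1_C)$. Hence $(\langle u\rangle,e)$ is a $C$-Brauer pair.
\item By the Second Main Theorem in $H$, only the $C$-component of $\Res^G_H(\chi)$ contributes, and Theorem~\ref{thm1} finishes.
\end{itemize}

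Your route is elementary and self-contained. It needs only Theorem~\ref{thm1} and standard Brauer pair theory, and bypasses both the TI stable equivalence and the Puig-type analysis behind Theorem~\ref{thm2} (Lemmas~\ref{lem75} and \ref{lem8}). It also shows that the non-ordinary generalised decomposition numbers of $\chi$ occur literally, not merely up to sign, among those of $1_C\cdot\Res^G_H(\chi)$; running the same argument backwards gives the converse. The paper's route is shorter given the machinery already in place, and its stated purpose there is precisely to illustrate Theorem~\ref{thm2}.
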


\begin{proof}
It is well-known (see e.g. \cite[Theorem 9.8.6]{LiBookII}) that the hypotheses 
imply that the truncated induction functor $1_B\cdot\Ind^G_H$ and the truncated 
restriction functor $1_C\cdot\Res^G_H$  
given by the $B$-$C$-bimodule $B\cdot 1_C$ and its dual induces a stable
equivalence of Morita type whose indecomposable summands have trivial 
source. Thus Theorem \ref{thm2} implies that 
$c(\chi)_p =$ $ c(1_C\cdot \Res^G_H(\chi))_p$.
We clearly have $c(\chi)_p\geq$ $c(\Res^G_H(\chi))_p$.
Since $H$ contains $C_G(u)$ for any nontrivial element $u$ in $P$, 
we also have $c(\Res^G_H(\chi))_p\geq$
$c(1_C\cdot \Res^G_H(\chi))_p$, by Lemma \ref{lem10}. 
The result follows.
\end{proof}

The following  Corollary is a special case of \cite[Theorem C]{HungFry}
(and is in  one of the steps in the proof of \cite[Theorem 3.4]{HungFry});
we mention this just  to illustrate the use of Theorem \ref{thm2}.

\begin{Corollary} \label{cyclic-Cor}
Let $G$ be a finite group, $B$ a block of $\OG$ with a nontrivial cyclic 
defect group $P$. Denote by $P_1$ the unique subgroup of order $p$ of $P$
and set $H=N_G(P_1)$. Denote by $C$ the block of $\OH$ which is the Brauer
correspondent of $B$. There is a bijection $\gamma : \Irr(B)\to$ $\Irr(C)$ 
and there are signs $\delta_\chi\in$ $\{1,-1\}$ such that 
$\Z\Irr(B)\cong$ $\Z\Irr(C)$ sending $\chi\in$ $\Irr(B)$ to $\delta_\chi\gamma(\chi)$
is a perfect isometry, and such that
$$c(\chi)_p= \Res^G_H(\chi))_p =  c(1_C\cdot \Res^G_H(\chi))_p = c(\gamma(\chi))_p.$$
\end{Corollary}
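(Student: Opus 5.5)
The plan is to combine the two mechanisms already set up in the paper: Proposition \ref{TI-Prop} for the restriction statements, and Corollary \ref{Cor2} (or Theorem \ref{thm2}) for the existence of $\gamma$ and the last equality.

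\emph{Step 1: the trivial intersection hypothesis.} Since $P$ is cyclic, every nontrivial subgroup of $P$ contains $P_1$. Take $g\in G\setminus H$ and suppose $P\cap{^gP}\neq\{1\}$. This intersection is a nontrivial subgroup of ${^gP}$, so it contains both $P_1$ and ${^gP_1}$, the unique subgroups of order $p$ of $P$ and of ${^gP}$. Hence ${^gP_1}=P_1$, that is, $g\in N_G(P_1)=H$, which is a contradiction. Also $H$ contains $N_G(P)$, since $P_1$ is characteristic in $P$. So $C$, the Brauer correspondent, makes sense as a block of $\OH$ with defect group $P$.

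\emph{Step 2: the restriction equalities.} Proposition \ref{TI-Prop} now applies and gives
$$c(\chi)_p=c(\Res^G_H(\chi))_p=c(1_C\cdot\Res^G_H(\chi))_p$$
for all $\chi\in\Z\Irr(B)$.

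\emph{Step 3: the bijection $\gamma$.} Truncated restriction $1_C\cdot\Res^G_H$, given by the bimodule $1_C\OG 1_B$ (equivalently, its indecomposable non-projective summand $M$), induces a stable equivalence of Morita type with trivial source $V=\CO$, whose character is integer valued. Since $P$ is cyclic, Dade's theory (\cite{Dadecyclic}, with \cite[Proposition 3.3]{KL10}) provides an isometry $\Phi:\Z\Irr(B)\cong\Z\Irr(C)$ extending the isometry $L^0(B)\cong L^0(C)$ induced by $M$, and $\Phi$ is perfect. Being an isometry between lattices with orthonormal bases $\Irr(B)$ and $\Irr(C)$, $\Phi$ sends each $\chi$ to $\delta_\chi\gamma(\chi)$ for a bijection $\gamma$ and signs $\delta_\chi$. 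The last statement of Theorem \ref{thm2} then gives $c(\chi)_p=c(\Phi(\chi))_p=c(\gamma(\chi))_p$, because signs do not affect conductors.

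Alternatively, one can compose Corollary \ref{Cor2} for $B$ and for $C$ (both isotypic to $\CO(P\rtimes E)$ with the same inertial quotient $E$) to get such a $\gamma$ directly.

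\emph{Main obstacle.} The delicate point is justifying that the isometry extending $L^0(B)\cong L^0(C)$ exists and is given by signed bijections. I would cite Dade/\cite[Theorem 11.10.2]{LiBookII} for this, applied to the stable equivalence between $B$ and $C$ rather than between $B$ and the local block over $N_G(P)$. If needed, I would route this through $\CO(P\rtimes E)$, using that $C$ is Morita equivalent to a block of $N_H(P)$ by Green correspondence, which is compatible with the restriction.
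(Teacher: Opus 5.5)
Your proposal is correct and follows essentially the paper's route: the trivial-intersection property for $H=N_G(P_1)$ (which the paper only asserts as easy and you verify), Proposition \ref{TI-Prop} for the first two equalities, and \cite[Theorem 11.10.2]{LiBookII} together with Theorem \ref{thm2} for the signed bijection $\gamma$ and the last equality. The paper extracts from that theorem the slightly more explicit relation $1_C\cdot\Res^G_H(\chi)=\delta_\chi\gamma(\chi)+\psi_\chi$ with $\psi_\chi\in\Pr(C)$, so $\chi\mapsto\delta_\chi\gamma(\chi)$ agrees with truncated restriction modulo projective characters; it is therefore an extension of the $L^0$-isometry of the kind you invoke, and the last equality then follows just as in your argument.
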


\begin{proof}
It is well-known and easy to check that for $g\in G\setminus H$ we have
$P\cap {^gP} = \{1\}$. Moreover, by
\cite[Theorem 11.10.2]{LiBookII}, 
there is a bijection $\gamma : \Irr(B)\to$ $\Irr(C)$ such that
$1_C\cdot \Res^G_H(\chi) =$ $\delta_\chi\gamma(\chi) + \psi_\chi$ for some
projective character $\psi$  in $\Pr(C)$ and some $\delta_\chi\in$ $\{1,-1\}$.
Thus the Corollary follows from Proposition \ref{TI-Prop} and Theorem \ref{thm2}.
\end{proof}

\bigskip\noindent{\bf Acknowledgements.} 
The author wishes to thank Hung Ngoc Nguyen and  Radha Kessar
for some very helpful comments and suggestions.
Moreover, the author acknowledges funding from EPSRC grant EP/X035328/1.


\end{document}